\tikzstyle{vertex}=[circle, draw, inner sep=0pt, minimum size=5pt]
\newcommand{\vertex}{\node[vertex]}
\newtheorem{teo}{Theorem}[section]
\newtheorem{lema}[teo]{Lemma}
\newtheorem{prop}[teo]{Proposition}
\newtheorem{prob}{Problem}
\newtheorem{exemplo}[teo]{Example}
\renewcommand{\qedsymbol}{$\square$}
\newcommand*{\QEDA}{\hfill\ensuremath{\square}}%
\let\oldexemplo\exemplo
\RenewDocumentCommand{\exemplo}{o}{%
  \IfNoValueTF{#1}
    {\oldexemplo}
    {\oldexemplo[#1]}%
  \normalfont
}
\begin{document}
%\begin{frontmatter}

\title[]{Graphs with at most two nonzero distinct absolute eigenvalues}
\subjclass{05C50, 15A18, 15A60}
\keywords{graph energy, trace norm, few eigenvalues}

\author[]{N. E. Ar\'evalo\textsuperscript{1}, R. O. Braga\textsuperscript{2} \and V. M. Rodrigues\textsuperscript{3}}
\email{\textsuperscript{1}nearevalob@unal.edu.co}
%\author[]{R. O. Braga\textsuperscript{2}}
\email{\textsuperscript{2}rbraga@ufrgs.br}
%\author[]{V. M. Rodrigues\textsuperscript{3}}
\email{\textsuperscript{3}vrodrig@mat.ufrgs.br}
\address{Instituto de Matem\'atica, Universidade Federal do Rio Grande do Sul, Porto Alegre, Brazil}

%\author[1] {N. E. Ar\'evalo}\ead{nearevalob@unal.edu.co}

%\author[1] {R. O. Braga}\ead{rbraga@ufrgs.br}

%\author[1] {V. M. Rodrigues\corref{cor1}}\ead{vrodrig@mat.ufrgs.br}

%\cortext[cor1]{Corresponding author}

%\address[1] {Instituto de Matem\'atica e Estat\'istica, Universidade Federal do Rio Grande do Sul, Porto Alegre, Brazil}

\begin{abstract}
In his survey ``Beyond graph energy: Norms of graphs and matrices" \cite{Niki2}, Nikiforov proposed two problems concerning characterizing the graphs that attain equality in a lower bound and in a upper bound for the energy of a graph, respectively. We show that these graphs have at most two nonzero distinct absolute eigenvalues and investigate the proposed problems organizing our study according to the type of spectrum they can have. In most cases all graphs are characterized. Infinite families of graphs are given otherwise. We also show that all graphs satifying the properties required in the problems are integral, except for complete bipartite graphs $K_{p,q}$ and disconnected graphs with a connected component $K_{p,q}$, where $pq$ is not a perfect square.
\end{abstract}

%\begin{keyword}
%graph energy \sep trace norm \sep few eigenvalues

%\MSC 05C50 \sep 15A18  \sep 15A60
%\end{keyword}
\subjclass{05C50, 15A18, 15A60}
\keywords{graph energy, trace norm, few eigenvalues}

%\end{frontmatter}
\maketitle

%%%%%%%%%%%%%%%%%%%%%%%%%%%%%%%%%%%%%%%%%%%%%%%%%%%%%%%%%%%%%%%%%%%%%%%%%%%%%%%%%%
%%%%%%%%%%%%%%%%%%%%%%%%%%%%%%%%%%%%%%%%%%%%%%%%%%%%%%%%%%%%%%%%%%%%%%%%%%%%%%%%%%

\section{Introduction}\label{sec:I} 

Among the various spectral parameters studied in spectral graph theory, one can highlight the \textit{energy} of a graph, introduced by Gutman in 1978 \cite{Gutman}. The energy of a graph $G$ with $n$ vertices is defined as
$$\mathcal{E}\left(G\right)=\sum_{j=1}^{n}\left|\lambda_{j}\right|,$$
where $\lambda_1\geq\lambda_2\geq\cdots\geq\lambda_n$ are the eigenvalues of the adjacency matrix of $G$. Throughout this paper $G$ is a simple nonoriented graph  with  adjacency matrix $A=A\left(G\right)$. The eigenvalues of $G$ are the eigenvalues of $A$ and the collection of eigenvalues of $G$ is called the \textit{spectrum of} $G$, denoted by $Spec\left(G\right)$. 

Graph energy has been intensively studied; for a thorough introduction to the subject see \cite{Li}.  One of the questions of great interest in spectral graph theory is which graphs (of a given class) have the largest or smallest energy values. There is also a great effort in obtaining effective bounds on graph energy. A step further in this study was taken by Nikiforov \cite{Niki1} who generalized  the concept of graph energy  by defining the energy of any matrix with complex entries. 

Recall that the \textit{singular values} of a complex matrix $M$ of order $m \times n$ are the square roots of the eigenvalues of $M^{*}M$, where $M^{*}$ is the conjugate transpose of $M$. The \textit{trace norm} $\left\|M\right\|_{*}$ of $M$ is the sum of its singular values, which for a real symmetric matrix are exactly the modules of its eigenvalues. Therefore,  the trace norm of the adjacency matrix  of a graph $G$ is the energy of $G$, that is $\left\|A\left(G\right)\right\|_{*} = \mathcal{E}\left(G\right)$. This observation made by Nikiforov in  \cite{Niki1} ``triggered some sort of a chain reaction", as Nikiforov himself noted in his survey on norms of graphs and matrices \cite{Niki2}. Besides extending the concept of graph energy  to non-symmetric and even to non-square matrices, research on matrix norms provide new techniques to the study of graph energy. For instance,  matrix norms were used in \cite{Niki2} to derive and extend a well-known upper bound on the energy of a graph of order $n$ given by Koolen and Moulton in \cite{Koolen}, namely 
\begin{equation}
\label{km}
\mathcal{E}\left(G\right) \leq \frac{n}{2}\left(1+\sqrt{n}\right). 
\end{equation}

There are several bounds for the trace norm of general matrices. A lower bound on the trace norm of a complex matrix $M=\left[a_{ij}\right]$ of order $m \times n$ with rank at least $2$ was given by Nikiforov in \cite{Niki1}. He showed that
	\begin{equation}\label{eqn101}
	\left\|M\right\|_{*}\geq\sigma_{1}\left(M\right)+\frac{1}{\sigma_{2}\left(M\right)} \left(\sum_{i,j}\left|a_{ij}\right|^2-\sigma_{1}^2\left(M\right)\right),
	\end{equation}
	where $\sigma_1\geq\sigma_2\geq\cdots\geq\sigma_n$ are the singular values of $M$. 
Note that inequality (\ref{eqn101}) gives a lower bound for the energy of a \textit{nonempty graph}, i.e. a graph with at least one edge.  In fact in this case equality holds if and only if all nonzero eigenvalues of $G$ other than the \textit{index} (i.e., the largest eigenvalue of $G$) have the same absolute value. Nikiforov observed in \cite{Niki2} that  bound (\ref{eqn101}) is quite efficient for graphs:  equality is attained for the adjacency matrix of a design graph, or the complete graph $K_n$, or a complete bipartite graph. He then proposed the following problem: 

\begin{prob}\cite[Problem 2.13]{Niki2}\label{prob}
Give a constructive characterization of all graphs $G$ such that the nonzero eigenvalues of $G$ other than its index have the same absolute value.
\end{prob}

A slightly different problem was also proposed in \cite{Niki2}. Nikiforov noted that bound (\ref{km})  was derived  by Koulen and Moulen from an upper bound for the energy of a graph $G$ of order $n$ with $m$ edges and index $\lambda_1$, namely
\begin{equation}
\label{km2}
\mathcal{E}\left(G\right)\leq\lambda_1 +\sqrt{\left(n-1\right) \left(2m-\lambda_1^2\right) }, 
\end{equation}
with equality if and only if $\left|\lambda_2\right|=\cdots=\left|\lambda_n\right|$. He observed that if equality is attained in (\ref{km2}) and $G$ is regular, then  either $G=\left(n/2\right)K_2$, or $G=K_n$, or $G$ is a design graph. Besides if equality is attained and $G$ is not regular and is disconnected, then $G = K_{n-2r}+rK_2$. This motivated the following problem:

\begin{prob}\cite[Problem 2.40]{Niki2}\label{prob2}
Give a constructive characterization of all irregular connected graphs $G$ of order $n$ with $\left|\lambda_2\left(G\right)\right|=\cdots=\left|\lambda_n\left(G\right)\right|$.
\end{prob}

In this work we investigate Problems \ref{prob} and \ref{prob2}.  Let $\mathcal{G}_n$ and $\mathcal{H}_n$ be the classes of nonempty graphs of order $n$ that satisfy the properties required in Problems \ref{prob} and \ref{prob2}, respectively. Clearly the graphs in $\mathcal{G}_n$ have at most two nonzero distinct absolute eigenvalues and $\mathcal{H}_n$ is a subset of $\mathcal{G}_n$. It is known that a graph $G$ of order $n$ has only one eigenvalue if and only if it is the empty graph, and $G$ has exactly two distinct eigenvalues if and only if $G=\left(n/r\right)K_r$. Since $K_n$ is a regular graph with $Spec\left(K_n\right)=\left\lbrace\left[-1\right]^{n-1},\left[n-1\right]^{1}\right\rbrace$, every graph with $2$ distinct eigenvalues belongs to $\mathcal{G}_n$ but not to $\mathcal{H}_n$. Hence it remains to characterize graphs with $3$ or $4$  distinct eigenvalues in the family $\mathcal{G}_n$, and in particular those with $3$ distinct eigenvalues that belong to $\mathcal{H}_n$.

Graphs with few distinct eigenvalues form a largely studied class of graphs. They were first investigated by Doob  in 1970 \cite{Doob} and have been studied by several authors since then. A first nontrivial family of such graphs that have received a great deal of attention are the \textit{strongly regular graphs}, which are the regular graphs with exactly three distinct eigenvalues \cite{Shrikhande}. Examples, constructions, characterizations and some nonexistence results about connected regular graphs with four eigenvalues are given in \cite{Huang}, \cite{VanDam1995} and \cite{VanDam4}. However, there are many open questions about irregular graphs with three or four eigenvalues. Examples of irregular graphs with three distinct eigenvalues are given in \cite{Chuang} and \cite{MUZY}. All irregular graphs with three eigenvalues with least eigenvalue $-2$ are determined in \cite{VANDAM}, where new infinite families of examples are also given. 

In our study we first consider connected graphs. The Perron-Frobenius theorem (see for instance \cite{Horn}) asserts that the index of a connected graph is simple. Thus the only possible spectra of a connected graph $G \in \mathcal{G}_n$ with more than $2$ eigenvalues are:

\begin{itemize}
\item[] \textbf{Case 1 -} Three distinct eigenvalues:

\begin{itemize}
\item[(a)] $\,\,Spec\left(G\right)=\left\lbrace\left[\lambda\right]^1,\left[0\right]^{n-t-1},\left[\mu\right]^t\right\rbrace$, $\lambda>0>\mu$
\end{itemize}
or	
\begin{itemize}
\item[(b)] $\,\,Spec\left(G\right)=\left\lbrace\left[\lambda\right]^1,\left[\mu\right]^{n-t-1},\left[-\mu\right]^t\right\rbrace$, $\lambda>\mu>0.$
\end{itemize}
\vspace{2mm}
\item [] \textbf{Case 2 -} Four distinct eigenvalues:
\begin{itemize}
\item[] $\,\,Spec\left(G\right)=\left\lbrace\left[\lambda\right]^1,\left[\mu\right]^{n-k-t-1},\left[0\right]^{t},\left[-\mu\right]^k\right\rbrace$, $\lambda>\mu>0.$
\end{itemize}
\end{itemize}
Note that the graphs in $\mathcal{H}_n$ with more than $2$ distinct eigenvalues have spectrum of the form given in Case 1(b).

In Section \ref{sec:II-I} we study graphs with spectrum as in Case 1.  Graphs of Case 2 are investigated in Section \ref{sec:III-I}. Table \ref{table} summarizes the results we obtained in these cases, that is, our contribution to the solution of Problem \ref{prob} for connected graphs with more than $2$ eigenvalues. Our contribution to Problem \ref{prob2} is given by the irregular graphs in the second row of Table \ref{table}. In Section \ref{sec4} we investigate disconnected graphs in $\mathcal{G}_n$. Final remarks are made in 
Section \ref{open}.

\begin{table}[H]
\def\arraystretch{1.3} % vertical stretch factor
\centering
\begin{adjustbox}{max width=\textwidth}
\begin{tabular}{|c|c|c|}% 
     \hline \textit{Spectrum} &  \textit{Regular graphs} & \textit{Irregular graphs} \\
      \hline
      \begin{tabular}{l}
     Three distinct eigenvalues, \\one equals $0$:  \\ \hspace{0.3cm}
         $\left\lbrace\left[\lambda\right]^1,\left[0\right]^{n-t-1},\left[\mu\right]^t\right\rbrace$, \\  $\lambda > 0 > \mu$.
    \end{tabular}  
        & 
        \begin{tabular}{l} Integral complete $r$-partite graph \\ with all parts of size $-\mu$, with $r \geq 2$  \\ \hspace{1cm} (Theorem \ref{teoimp2}) \end{tabular} 
        & \begin{tabular}{l} Complete bipartite graph $K_{p,q}$, \\ with $p \neq q$ and $pq =\mu^2$ \\  \hspace{1.2cm}
        (Theorem \ref{teoimp2})  \end{tabular}\\
      \hline
      \multirow{7}{*}{\begin{tabular}{l}
      Three distinct nonzero\\ eigenvalues:\\\hspace{0.3cm} $\left\lbrace\left[\lambda\right]^1,\left[\mu\right]^{n-t-1},\left[-\mu\right]^t\right\rbrace$, \\ $\lambda > \mu > 0$.
     \end{tabular}} &  \multirow{6}{*}{\begin{tabular}{l} Design graph with parameters\\  $\left(n,\lambda,\lambda-\mu^2,\lambda-\mu^2\right)$\\ \hspace{1cm} (Theorem \ref{teoimp3}) \end{tabular}}    
     &  \begin{tabular}{l}
     Graphs with $\mu=2$: \\ 
     - cone over the Shrikhande graph \\ 
     - cone over the lattice graph $L_{2}\left(4\right)$\\
     - graph on the points of $AG\left(3,2\right)$ \\ 
     \hspace{1.5cm} (Theorem \ref{teoimp3}) \end{tabular} \\ 
     \cline{3-3}
     & & \begin{tabular}{l}Graphs in the families of \\
     Examples \ref{ex1} and \ref{ex2novo} \end{tabular}\\
     \cline{3-3}
     %Graphs in Examples \ref{ex1} and \ref{ex2novo} 
     & &\begin{tabular}{l}
     Integral multiplicative graphs with \\
         $\mu \geq 3$  and $n > 30$, that are not in the \\ families  of  Examples \ref{ex1} and \ref{ex2novo} \\ \hspace{2cm} (Open)
       \end{tabular}\\
      \hline
      \multirow{7}{*}{
      \begin{tabular}{l}
 Four distinct  eigenvalues, \\one equals $0$: \\ $\left\lbrace\left[\lambda\right]^1,\left[\mu\right]^{n-k-t-1},\left[0\right]^{t},\left[-\mu\right]^k\right\rbrace$, \\ $\lambda>\mu>0.$ \end{tabular}} 
 & 
  \begin{tabular}{l} $\overline{Q_3\circledast J_{\frac{\mu}{2}}}$ with $\lambda = 2\mu$ \\ \hspace{0.5cm}(Theorem \ref{teoimp4}) \end{tabular}
  
 & \begin{tabular}{l} \\  Graphs in the families of \\Examples \ref{fam1irr4} and \ref{fam2irr4}
 \end{tabular}
     \\ \cline{2-2}
  & 
  \begin{tabular}{l}
  Graphs in the families of \\Examples \ref{reg4srg} and \ref{reg4van}
  \end{tabular}
  &
  \\ \cline{2-3}
  & 
   \begin{tabular}{l}  Integral graphs with $n>30$, where \\only  the index is  simple, that  are not in \\the families of Examples \ref{reg4srg} and \ref{reg4van} \\ \hspace{2cm} (Open) \end{tabular} 
   
      & 
      \begin{tabular}{l}
      Graphs not in the families \\ of Examples \ref{fam1irr4} and \ref{fam2irr4}\\ \hspace{1cm}(Open)
      \end{tabular}
      \\ \hline
    \end{tabular}
\end{adjustbox}
    \caption{Connected graphs in $\mathcal{G}_n$ }
    \label{table}
\end{table}

We completely characterize the graphs in $\mathcal{H}_n$, which solves Problem \ref{prob2} except that the characterization we give is not always constructive. In that case we present two infinite families of graphs. Similarly, for the  graphs in $\mathcal{G}_n$ with four distinct eigenvalues, a partial constructive characterization in the case they are regular and infinite families satisfying the remaining cases are given.

It follows from this work that except for the complete bipartite graphs $K_{p,q}$ and disconnected graphs with a connected component $K_{p,q}$, where $pq$ is not a perfect square, all graphs in $\mathcal{G}_n$ are integral, i.e. their spectra consists entirely of integers. In addition, since the line graph of a regular integral graph is also integral \cite{Balinska}, other integral graphs can be obtained by taking the line graphs of the graphs in $\mathcal{G}_n$. 

%%%%%%%%%%%%%%%%%%%%%%%%%%%%%%%%%%%%%%%%%%%%%%%%%%%%%%%%%%%%%%%%%%%%%%%%%%
%%%%%%%%%%%%%%%%%%%%%%%%%%%%%%%%%%%%%%%%%%%%%%%%%%%%%%%%%%%%%%%%%%%%%%%%%
\section{Three distinct eigenvalues}\label{sec:II-I}

In this section we characterize all connected graphs in the family $\mathcal{G}_n$ that have exactly three distinct eigenvalues. 

\subsection{0 is an eigenvalue}
\label{subsecao1}

\begin{sloppypar}
We first consider connected graphs in $\mathcal{G}_n$ with spectrum             ${\left\lbrace\left[\lambda\right]^1,\left[0\right]^{n-t-1},\left[\mu\right]^t\right\rbrace}$, where $1 \leq t \leq n-2$ and $\lambda>0>\mu$.
We need some auxiliary results and definitions. 
Recall that an \textit{$r$-partite graph} is a graph whose vertices can be partitioned into $r$ disjoint sets, called \textit{parts}, such that no two vertices within the same part are adjacent. We write $K_{p_1,p_2,\ldots,p_r}$ to represent the $r$-partite graph with parts of sizes $p_1\leq p_2 \leq\ldots\leq p_r$.  A \textit{complete $r$-partite graph} is an $r$-partite graph such that every two vertices of different sets of the partition are adjacent. When $r=2$ we have a \textit{bipartite graph} in the former and a  \textit{complete bipartite graph} in the latter case.
\end{sloppypar}

\begin{lema}\cite[Theorem 2.3.4]{Asratian}\label{lemanovo}
A graph with at least one edge is bipartite if and only if its spectrum is symmetric with respect to $0$.
\end{lema}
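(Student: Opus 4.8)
The plan is to prove the two implications of the biconditional separately, using only elementary spectral facts.

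For the direction ``bipartite $\Rightarrow$ symmetric spectrum'', I would order the vertices so that the two parts $V_1,V_2$ of the bipartition are listed consecutively; then $A=A(G)$ has the block form $A=\left(\begin{smallmatrix} O & B \\ B^{\top} & O\end{smallmatrix}\right)$. If $x=(x_1,x_2)^{\top}$, partitioned conformally with the blocks, is an eigenvector of $A$ for an eigenvalue $\lambda$, then a one-line computation shows that $A(x_1,-x_2)^{\top}=-\lambda\,(x_1,-x_2)^{\top}$. Since $x\mapsto (x_1,-x_2)^{\top}$ is a linear isomorphism, the eigenspaces for $\lambda$ and for $-\lambda$ have the same dimension; hence $Spec(G)$ is symmetric with respect to $0$.

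For the converse, ``symmetric spectrum $\Rightarrow$ bipartite'', I would argue by contraposition. If $G$ is not bipartite, it contains a cycle of odd length $\ell$; walking around this cycle gives a closed walk of length $\ell$ based at some vertex $v$, so the $(v,v)$-entry of $A^{\ell}$ is positive, and since $A^{\ell}$ has nonnegative entries this forces $\operatorname{tr}(A^{\ell})>0$. On the other hand $\operatorname{tr}(A^{\ell})=\sum_i\lambda_i^{\ell}$, and for $\ell$ odd this sum vanishes whenever the multiset $\{\lambda_i\}$ is symmetric about $0$, because the contributions of $\lambda$ and $-\lambda$ cancel. This contradiction shows that a graph with symmetric spectrum has no odd cycle and is therefore bipartite. (The hypothesis of having at least one edge only excludes the empty graph, for which the statement holds trivially.)

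I do not anticipate a serious obstacle, since this is a classical equivalence; the only point requiring care in the converse is recalling the standard bridge between the spectrum and the combinatorics of $G$ — namely that $\operatorname{tr}(A^{k})$ counts closed walks of length $k$ in $G$ — together with the elementary graph-theoretic fact that bipartiteness is equivalent to the absence of odd cycles. An alternative, purely algebraic treatment of the converse observes that vanishing of all odd power sums $\sum_i\lambda_i^{k}$ forces, through Newton's identities, the vanishing of all odd-degree coefficients of the characteristic polynomial $\phi_G$, whence $\phi_G(-x)=(-1)^{n}\phi_G(x)$ and the roots pair up as $\pm\lambda$; but this amounts to the same reasoning rephrased via coefficients, so I would keep the closed-walk argument as the main line.
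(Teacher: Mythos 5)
Your proof is correct: the sign-flip $(x_1,x_2)\mapsto(x_1,-x_2)$ on a conformally partitioned eigenvector gives the forward direction with matching multiplicities, and the odd-closed-walk/trace argument correctly rules out symmetry for non-bipartite graphs. Note that the paper does not prove this lemma at all — it is quoted from Asratian et al.\ as a known result — so there is nothing to diverge from; your argument is the standard textbook proof and would serve as a self-contained justification.
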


\begin{lema}\label{teo8}
Let $G$ be a connected graph with three distinct eigenvalues. If $G$ is bipartite or its index is not an integer, then $G$ is a complete bipartite graph.
\end{lema}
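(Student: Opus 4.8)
The plan is to prove the statement in two steps. First I would settle the bipartite case directly; then I would show that if the index is not an integer then $G$ is necessarily bipartite, so that the second hypothesis reduces to the first. Write the distinct eigenvalues as $\rho>\theta_1>\theta_2$, where $\rho$ is simple by Perron--Frobenius and $n\ge 3$. If $G$ is bipartite, then by Lemma~\ref{lemanovo} $Spec(G)$ is symmetric about $0$; a symmetric set with three elements is $\{-\rho,0,\rho\}$, so every nonzero eigenvalue of $A$ has absolute value $\rho$. Fix the bipartition $V=X\cup Y$ (unique since $G$ is connected) and write $A=\begin{pmatrix}0&N\\N^{\top}&0\end{pmatrix}$ with $N$ the $0$--$1$ biadjacency matrix. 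The nonzero eigenvalues of $A$ are exactly $\pm$ the nonzero singular values of $N$; since these all equal $\rho$ and $\rho$ is simple, $\operatorname{rank}N=1$. Thus $N=uv^{\top}$ is a nonnegative matrix of rank one, and since $N$ is a $0$--$1$ matrix with no zero row or column, $u$ and $v$ may be taken with all entries positive, which forces $N=J$. Hence $G=K_{|X|,|Y|}$, a complete bipartite graph.

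Now suppose instead that the index $\rho$ is not an integer; I claim $G$ is bipartite, which finishes by the previous paragraph. Since $A$ is a symmetric integer matrix, it is diagonalizable and its minimal polynomial is $p(x)=(x-\rho)(x-\theta_1)(x-\theta_2)$, which lies in $\mathbb{Z}[x]$. The eigenvalue $\rho$ is therefore an algebraic integer, and its minimal polynomial over $\mathbb{Q}$ is a monic integer polynomial dividing $p$, of degree $2$ or $3$ (degree $1$ would give $\rho\in\mathbb{Z}$). If the degree is $3$, then $\rho,\theta_1,\theta_2$ are Galois conjugates, hence have a common multiplicity in the spectrum, which equals $1$ because $\rho$ is simple; so $n=3$, and the only connected graph on three vertices with three distinct eigenvalues is $P_3=K_{1,2}$, which is already complete bipartite. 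So assume the degree is $2$: then $\{\theta_1,\theta_2\}=\{\bar\rho,\xi\}$, where $\bar\rho$ is the irrational (hence simple) Galois conjugate of $\rho$ and $\xi\in\mathbb{Z}$ is the third eigenvalue, of multiplicity $n-2$.

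Two sub-cases remain. If $\xi=\theta_2$, set $B:=A-\xi I$; its eigenvalues are $\rho-\xi>0$, $\theta_1-\xi>0$ and $0$ with multiplicity $n-2$, so $B$ is positive semidefinite of rank $2$ with constant diagonal $-\xi$. Since $G$ has an edge, $B\neq0$, so $-\xi\ge1$; and since $\operatorname{rank}B=2$, every principal $3\times3$ submatrix of $B$ has zero determinant. Evaluating this determinant on three vertices inducing a $P_3$ (such a triple exists because $G$ is connected and, having three distinct eigenvalues, is not complete) and using that the off-diagonal entries of $B$ are $0$ or $1$ yields $-\xi^{3}+2\xi=0$, which is impossible for an integer $\xi\le-1$; so this sub-case does not occur. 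If instead $\xi=\theta_1$, then $\bar\rho=\theta_2$, and comparing traces gives $\rho+\theta_2=-(n-2)\theta_1$. If $\theta_1>0$ this forces $\theta_2<-\rho<0$, so $Spec(G)$ is not symmetric, $G$ is non-bipartite, and Perron--Frobenius gives $\rho>|\theta_2|$, contradicting $\theta_2<-\rho$; so $\theta_1\le0$. If $\theta_1=0$, then $\rho+\theta_2=0$, hence $Spec(G)=\{[\rho]^1,[0]^{n-2},[-\rho]^1\}$ is symmetric and $G$ is bipartite by Lemma~\ref{lemanovo}, as required. Finally, if $\theta_1\le-1$, write $B:=A-\theta_1 I=aa^{\top}-bb^{\top}$, where $a$ is a positive multiple of the Perron eigenvector (so all $a_i>0$), $\langle a,b\rangle=0$ and $\|a\|^{2}=\rho-\theta_1$; the diagonal of $B$ is $-\theta_1$, so $b_i^{2}=a_i^{2}+\theta_1$ and thus $a_i^{2}\ge|\theta_1|\ge1$ for every $i$. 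Summing, $\rho+|\theta_1|=\|a\|^{2}\ge n|\theta_1|$, so $\rho\ge(n-1)|\theta_1|\ge n-1$; since the index of an $n$-vertex graph is at most $n-1$, this forces $G=K_n$, contradicting that $G$ has three distinct eigenvalues.

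I expect the main obstacle to be the degree-$2$ analysis: one must choose the correct auxiliary matrix ($A-\xi I$ when the rational eigenvalue is the smallest, $A-\theta_1 I$ when it is the middle one), keep careful track of which of $\theta_1,\theta_2$ is the Galois conjugate of $\rho$, and extract the arithmetic obstructions --- the vanishing $3\times3$ minors along an induced $P_3$ in one sub-case, and the norm estimate $\|a\|^{2}\ge n|\theta_1|$ pinning $G$ down to $K_n$ in the other. By contrast, the bipartite case is routine linear algebra.
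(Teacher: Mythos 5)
Your proof is correct, but it takes a genuinely different route from the paper on both halves of the statement. For the bipartite case the paper argues via the diameter bound: a connected graph with three distinct eigenvalues has diameter at most $2$, while a connected bipartite graph that is not complete bipartite has two vertices in different parts at distance at least $3$. You instead exploit the block structure $A=\left(\begin{smallmatrix}0&N\\N^{\top}&0\end{smallmatrix}\right)$, use symmetry of the spectrum to force the distinct eigenvalues to be $\{\rho,0,-\rho\}$ with $\rho$ simple, conclude $\operatorname{rank}N=1$, and deduce $N=J$; this is equally elementary and equally valid. For the non-integral index case the paper simply cites Van Dam's Proposition 2, whereas you give a self-contained proof: the minimal polynomial lies in $\mathbb{Z}[x]$, so either $\rho,\theta_1,\theta_2$ are full Galois conjugates (forcing $n=3$ and $G=P_3=K_{1,2}$) or $\rho$ is quadratic with simple conjugate $\bar\rho$ and an integer eigenvalue $\xi$ of multiplicity $n-2$; your case analysis (vanishing principal $3\times3$ minors of $A-\xi I$ along an induced $P_3$ when $\xi$ is the smallest eigenvalue, the trace/spectral-radius contradiction when the integer middle eigenvalue is positive, bipartiteness when it is $0$, and the Perron-vector norm estimate $\rho\geq(n-1)|\theta_1|$ pinning down $K_n$ when it is $\leq-1$) is sound at each step. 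What the paper's approach buys is brevity, by leaning on the known diameter bound and Van Dam's result; what yours buys is a completely self-contained argument that in effect reproves the cited proposition, at the cost of a longer and more delicate case analysis.
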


\begin{proof}\renewcommand{\qedsymbol}{}
The case where the index of $G$ is not an integer is a result by Van Dam \cite[Proposition 2]{VANDAM}. It is known that the diameter of a connected graph is strictly less than the number of its distinct eigenvalues (see for instance \cite{Brouwer}).  Hence the diameter of $G$ is at most $2$.  
Suppose that $G$ is bipartite with parts $U$ and $W$. If $G$ is not a complete bipartite graph then there is a vertex $u\in U$ and a vertex $w\in W$ that are not adjacent. Thus since $G$ is connected, the distance between $u$ and $v$ is at least $3$, a contradiction. \QEDA

\end{proof}

\begin{lema}\cite[Theorem 7.4]{Beineke}\label{teo1}
A connected graph $G$ is a complete $r$-partite graph if and only if $\lambda_2\leq 0$, where $\lambda_2$ is the second largest eigenvalue of $G$.
\end{lema}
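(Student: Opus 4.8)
The plan is to establish the two implications separately, after recording two standard combinatorial facts: (i) a graph has no induced path $P_3$ if and only if each of its connected components is a complete graph; (ii) a graph $G$ is complete multipartite if and only if $\overline{G}$ is a disjoint union of complete graphs, the parts of $G$ being the vertex sets of the components of $\overline{G}$. Combining (i) and (ii), the lemma is equivalent to the statement that a connected graph $G$ satisfies $\lambda_2(G)\le 0$ if and only if $G$ contains no induced subgraph isomorphic to $K_2\cup K_1$.

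For the implication ``$G$ complete $r$-partite $\Rightarrow\lambda_2(G)\le 0$'' I would use that $A(G)=J-I-A(\overline{G})$, where $J$ is the all-ones matrix and, by (ii), $\overline{G}$ is a disjoint union of cliques, so $\lambda_n(A(\overline{G}))\ge -1$. Then for every vector $x$ orthogonal to the all-ones vector one has $x^{\top}A(G)x=-\|x\|^2-x^{\top}A(\overline{G})x\le 0$; hence $A(G)$ is negative semidefinite on an $(n-1)$-dimensional subspace, and the Courant--Fischer min-max characterization forces $\lambda_2(G)\le 0$. This direction is routine.

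For the converse, suppose $G$ is connected with $\lambda_2(G)\le 0$ and assume, for a contradiction, that $G$ has an induced $K_2\cup K_1$, say an edge $ab$ together with a vertex $c$ adjacent to neither $a$ nor $b$. Connectivity is used precisely here: since $a,b,c$ are three distinct vertices, $G$ has order at least three, and being connected it has no isolated vertex, so $c$ has a neighbour $d$, necessarily distinct from $a$ and $b$. Now examine the subgraph induced by $\{a,b,c,d\}$, splitting into cases according to whether $d$ is adjacent to neither, to exactly one, or to both of $a,b$: this induced subgraph is respectively $2K_2$, $P_4$, or the ``paw'' (a triangle with a pendant edge), and a short $4\times 4$ computation shows that in each case the second largest eigenvalue is strictly positive (indeed $\lambda_2(2K_2)=1$). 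By interlacing, $\lambda_2(G)>0$, contradicting the hypothesis. Therefore $G$ has no induced $K_2\cup K_1$, and by (i)--(ii) it is complete $r$-partite for some $r$.

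I expect the only genuine work to lie in the converse: carrying out the three-case analysis on the fourth vertex $d$ and checking that $2K_2$, $P_4$, and the paw all have positive second eigenvalue --- equivalently, that these are exactly the minimal forbidden induced subgraphs for the property $\lambda_2\le 0$ on few vertices. One should also be careful to invoke connectivity correctly, since it is what produces the vertex $d$; without connectivity the conclusion would only be ``complete multipartite together with some isolated vertices''. The forward direction and facts (i)--(ii) are textbook and can be stated with minimal justification.
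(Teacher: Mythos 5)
Your proof is correct, but note that the paper does not prove this lemma at all: it is quoted verbatim from \cite[Theorem 7.4]{Beineke} (a classical characterization going back to Smith of graphs with second largest eigenvalue at most $0$), so there is no internal argument to compare yours against. On its own merits, your argument is sound and self-contained. The forward direction via $A(G)=J-I-A(\overline{G})$, the observation that $\overline{G}$ is a disjoint union of cliques and hence has least eigenvalue at least $-1$, and Courant--Fischer restricted to the hyperplane orthogonal to the all-ones vector is correct. The converse is where the real content lies, and you handle the delicate point properly: an induced $K_2\cup K_1$ alone would not suffice (its spectrum is $\{1,0,-1\}$, so $\lambda_2=0$), and it is exactly connectivity that supplies the fourth vertex $d$, necessarily distinct from $a,b$ because $c$ is nonadjacent to both; the three resulting induced subgraphs $2K_2$, $P_4$ and the paw indeed all have $\lambda_2>0$ (with $\lambda_2(P_4)=(\sqrt{5}-1)/2$ and the paw's $\lambda_2\approx 0.31$ a root of $x^3-x^2-3x+1$), so Cauchy interlacing yields the contradiction. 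Your reduction via the complement ($K_2\cup K_1$ being the complement of $P_3$, and $P_3$-free graphs being disjoint unions of cliques) is the standard route, and what it buys is an elementary, fully verifiable replacement for the external citation; it even recovers the sharper fact that $2K_2$, $P_4$ and the paw are the minimal forbidden induced subgraphs for $\lambda_2\le 0$ among connected graphs.
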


\begin{lema}\label{lemacom}
Let $G=K_{p_1,p_2,\ldots,p_r}$ be the complete $r$-partite graph of order $n$.
\begin{enumerate}
\item[$(i)$] \cite[Lemma 2]{Esser}
The characteristic polynomial of $G$ can be written as
\begin{equation*}
P_G\left(x\right) =x^{n-r}\left[\prod_{i=1}^{r}\left(x+p_i\right) -\sum_{i=1}^{r}p_i\prod_{\substack{j=1\\j\neq i}}^{r}\left(x+p_j\right)\right].
\end{equation*}
\item[$(ii)$] \cite[Theorem 1]{Esser} The $r-1$ negative eigenvalues $\lambda_{n-r+2},\ldots,\lambda_n$ of $G$ satisfy the ine\-qualities
\begin{equation*}
p_1\leq -\lambda_{n-r+2}\leq p_2\leq -\lambda_{n-r+3}\leq p_3\leq\cdots\leq p_{r-1}\leq -\lambda_n\leq p_r.
\end{equation*}
\end{enumerate}
\end{lema}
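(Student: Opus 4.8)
The statement is due to Esser \cite{Esser}; the plan is to recover both parts by elementary means. For $(i)$ I would exploit the fact that the partition of $G=K_{p_1,\dots,p_r}$ into its $r$ parts is equitable. Writing the adjacency matrix as $A=J_n-\bigoplus_{i=1}^{r}J_{p_i}$, where $J_m$ is the $m\times m$ all-ones matrix, I would observe that every vector summing to zero on each part is annihilated by $A$ — at a vertex of the $i$-th part the value of $Av$ is simply the sum of the entries of $v$ outside that part, which is $0$ — and that these vectors span a subspace of dimension $\sum_i(p_i-1)=n-r$. On its orthogonal complement, the vectors constant on each part, $A$ acts as the $r\times r$ quotient matrix $B$ with $b_{ii}=0$ and $b_{ij}=p_j$ for $i\neq j$. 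Hence $P_G(x)=x^{\,n-r}\det(xI_r-B)$, and it remains only to compute $\det(xI_r-B)$.

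Here I would note that $B=\mathbf{1}p^{\top}-\operatorname{diag}(p_1,\dots,p_r)$ with $p=(p_1,\dots,p_r)^{\top}$ and $\mathbf{1}$ the all-ones vector, so $xI_r-B=\operatorname{diag}(x+p_1,\dots,x+p_r)-\mathbf{1}p^{\top}$ is a rank-one perturbation of a diagonal matrix. The matrix determinant lemma then gives $\det(xI_r-B)=\bigl(\prod_i(x+p_i)\bigr)\bigl(1-\sum_i \tfrac{p_i}{x+p_i}\bigr)=\prod_i(x+p_i)-\sum_i p_i\prod_{j\neq i}(x+p_j)$, first for $x$ avoiding the $-p_i$ and then for all $x$ since both sides are polynomials. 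This is exactly $(i)$.

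For $(ii)$ I would set $f(x)=\det(xI_r-B)$, a monic polynomial of degree $r$ whose roots are precisely the eigenvalues of $G$ other than the $n-r$ forced zeros. Assuming first $p_1<p_2<\dots<p_r$, evaluating the expression above at $x=-p_k$ kills every term except the one indexed by $k$, leaving $f(-p_k)=-p_k\prod_{j\neq k}(p_j-p_k)$, whose sign is $(-1)^k$ because exactly $k-1$ of the factors $p_j-p_k$ are negative. Thus $f$ changes sign on each of the $r-1$ intervals $(-p_{k+1},-p_k)$ and so has a root in each; since moreover $f(0)=(1-r)\prod_i p_i<0$ for $r\ge 2$ and $f(x)\to+\infty$, the remaining root is the positive index $\lambda$, and a count of degrees shows there are no other roots. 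This yields the interlacing $p_1\le-\lambda_{n-r+2}\le p_2\le\cdots\le p_{r-1}\le-\lambda_n\le p_r$.

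The one place I expect trouble is the case of repeated part sizes, where $f(-p_k)=0$ and the strict sign-alternation argument breaks down. I would handle this either by continuity — perturb the $p_i$ to nearby distinct values, apply the previous paragraph, and let the perturbation tend to $0$ — or, more directly, by grouping equal parts into distinct values $q_1<\dots<q_s$ with multiplicities $m_\ell$ and analysing $g(x)=1-\sum_{\ell}\tfrac{m_\ell q_\ell}{x+q_\ell}$, which is strictly increasing on each interval between consecutive poles $-q_{\ell+1}$ and $-q_\ell$ and runs there from $-\infty$ to $+\infty$, hence has exactly one zero in each; together with the $-q_\ell$ of multiplicity $m_\ell-1$ this again accounts for $r-1$ negative eigenvalues with the stated interlacing. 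Apart from this technical point, everything is routine linear algebra.
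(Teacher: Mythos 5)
The paper does not prove this lemma at all: both parts are quoted from Esser--Harary \cite{Esser}, so there is no internal proof to compare against. Your argument is correct and self-contained, and it is essentially the standard route to these facts: the equitable partition reduces $P_G$ to $x^{n-r}\det(xI_r-B)$, the rank-one (matrix determinant lemma) computation gives $(i)$, and the sign evaluation $f(-p_k)=-p_k\prod_{j\neq k}(p_j-p_k)$ together with $f(0)=(1-r)\prod_i p_i<0$ gives $(ii)$ when the $p_i$ are distinct. The only delicate point is the one you flag, repeated part sizes, and either of your fixes is sound: the perturbation argument is legitimate because the interlacing is a statement about roots of the explicit polynomial $\prod_i(x+p_i)-\sum_i p_i\prod_{j\neq i}(x+p_j)$ (so non-integer perturbed values are allowed and continuity of roots applies), and the direct analysis via $g(x)=1-\sum_\ell m_\ell q_\ell/(x+q_\ell)$, strictly increasing between consecutive poles, correctly yields one root per gap plus $-q_\ell$ with multiplicity $m_\ell-1$, which matches the non-strict chain of inequalities.
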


The result below gives a constructive characterization of all graphs in the family $\mathcal{G}_n$ that have three distinct eigenvalues,  one  equals zero.

\begin{teo}\label{teoimp2}
Let $G$ be a connected graph of order $n$ with spectrum  $\left\lbrace\left[\lambda\right]^1,\left[0\right]^{n-t-1},\left[\mu\right]^t\right\rbrace$, where $1 \leq t \leq n-2$ and $\lambda>0>\mu$. Then 
\begin{itemize}
\item [(i)] $\mu=-\lambda$ if and only if $G$ is a complete bipartite graph.
\item [(ii)] $\mu\neq-\lambda$ if and only if $G$ is an integral complete $\left(t+1\right)$-partite graph with all parts of size $-\mu$, with $t\geq 2$.
\end{itemize}
\end{teo}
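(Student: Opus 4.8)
The plan is to prove both equivalences by combining the spectral data with the structural lemmas above. First I would dispose of part (i): if $\mu = -\lambda$, the spectrum $\{[\lambda]^1,[0]^{n-t-1},[-\lambda]^t\}$ is symmetric with respect to $0$, so Lemma \ref{lemanovo} forces $G$ to be bipartite, and then Lemma \ref{teo8} (the bipartite case) gives that $G$ is a complete bipartite graph. Conversely, if $G = K_{p,q}$ then it is well known (and follows from Lemma \ref{lemacom}$(i)$ with $r=2$, or simply from bipartiteness plus having three eigenvalues) that $Spec(G) = \{[\sqrt{pq}]^1,[0]^{p+q-2},[-\sqrt{pq}]^1\}$, so $t=1$ and $\mu = -\sqrt{pq} = -\lambda$. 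This direction is essentially a computation.

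For part (ii), the forward direction is the substantive one. Assume $\mu \neq -\lambda$. Since $G$ is connected with three distinct eigenvalues and $0$ lies strictly between $\lambda$ and $\mu < 0$, the second largest eigenvalue of $G$ is $\lambda_2 = 0 \le 0$, so Lemma \ref{teo1} applies and $G$ is a complete $r$-partite graph for some $r \ge 2$, say $G = K_{p_1,\dots,p_r}$. The number of distinct eigenvalues being $3$ means $G$ is not bipartite (by Lemma \ref{lemanovo}, since $\mu \neq -\lambda$ breaks the symmetry — and indeed if $r=2$ we would be back in case (i)), so $r \ge 3$. A complete $r$-partite graph has exactly $r$ nonzero eigenvalues counted with the multiplicity pattern coming from Lemma \ref{lemacom}$(i)$: the factor $x^{n-r}$ contributes the eigenvalue $0$ with multiplicity at least $n-r$, and the degree-$r$ bracket contributes the remaining $r$ eigenvalues, of which (by Perron–Frobenius and the interlacing in Lemma \ref{lemacom}$(ii)$) one is the positive index $\lambda$ and $r-1$ are negative. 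Since $G$ has only one distinct negative eigenvalue $\mu$, all $r-1$ negative eigenvalues coincide and equal $\mu$, which forces multiplicity $t = r-1$, i.e. $r = t+1$. The inequalities of Lemma \ref{lemacom}$(ii)$ now read $p_1 \le -\mu \le p_2 \le -\mu \le p_3 \le \cdots \le p_{r-1} \le -\mu \le p_r$, which pins every intermediate part size to $p_2 = \cdots = p_{r-1} = -\mu$ and squeezes $p_1 \le -\mu \le p_r$. To finish I would use the trace conditions: $\operatorname{tr}(A) = 0$ is automatic, but $\operatorname{tr}(A^2) = 2|E(G)| = \sum_{i<j} p_i p_j \cdot 2$ equals $\lambda^2 + t\mu^2$, and $\operatorname{tr}(A^3) = \lambda^3 + t\mu^3$ counts six times the number of triangles; alternatively, and more cleanly, I would plug the common value into the characteristic polynomial from Lemma \ref{lemacom}$(i)$ and compare with $(x-\lambda)x^{n-t-1}(x-\mu)^t$. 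Evaluating $P_G(x)/x^{n-r}$ at $x = -\mu$ (a candidate repeated root) and examining the derivative there shows that repeatedness with multiplicity $t = r-1$ forces $p_1 = p_r = -\mu$ as well, so all parts have size $-\mu$ and $n = (t+1)(-\mu)$. Integrality of $\lambda = -\mu \cdot t = t\mu$ and $\mu$ then follows since $-\mu = n/(t+1)$ is a ratio of integers that, being an eigenvalue, ... in fact once all parts equal $-\mu$ the graph is the regular complete multipartite graph whose spectrum is explicitly $\{[(r-1)(-\mu)]^1,[0]^{r(-\mu)-r},[-(-\mu)\cdot 0]...\}$ — I should just cite that $K_{s,s,\dots,s}$ ($r$ parts) has spectrum $\{[(r-1)s]^1,[-s]^{r-1},[0]^{r(s-1)}\}$, giving $s = -\mu \in \mathbb{Z}$ and $\lambda = (r-1)s \in \mathbb{Z}$, hence $G$ is integral. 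The condition $t \ge 2$ is exactly $r \ge 3$, already established.

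For the converse in (ii): if $G$ is an integral complete $(t+1)$-partite graph with all parts of size $s := -\mu \ge 1$ and $t \ge 2$, then $G = K_{s,s,\dots,s}$ with $t+1 \ge 3$ parts is connected, $s$-regular on the complement sense, and has the spectrum displayed above, namely $\{[ts]^1,[-s]^t,[0]^{(t+1)(s-1)}\}$ with $ts > 0 > -s$ and $(t+1)(s-1) = n - t - 1$; moreover $\lambda = ts \neq s = -\mu$ since $t \ge 2$. This matches the hypothesis spectrum with $\mu = -s \neq -\lambda$, completing the proof.

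The main obstacle I anticipate is the forward direction of (ii): specifically, upgrading the interlacing inequalities of Lemma \ref{lemacom}$(ii)$ — which only squeeze the two extreme part sizes $p_1 \le -\mu \le p_r$ — to the equalities $p_1 = p_r = -\mu$. The clean route is to exploit that $-\mu$ must be a root of the bracketed polynomial in Lemma \ref{lemacom}$(i)$ of multiplicity exactly $t = r-1$, which is one less than its degree $r$; writing out what it means for $Q(x) := \prod(x+p_i) - \sum p_i \prod_{j\neq i}(x+p_j)$ to have $(x+\mu)^{r-1}$ as a factor and matching the remaining linear factor $x - \lambda$ against the elementary symmetric functions of the $p_i$ should force uniformity of the part sizes; I expect this to be the one place where a short but careful polynomial argument is unavoidable.
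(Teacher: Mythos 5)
Your proposal is correct and follows essentially the same route as the paper: Lemma \ref{teo1} gives the complete multipartite structure, Lemma \ref{lemacom}$(ii)$ pins $p_2=\cdots=p_t=-\mu$ and $r=t+1$, and the step you flagged as the main obstacle is precisely what the paper settles by writing the bracket of Lemma \ref{lemacom}$(i)$ as $(x-\mu)^{t-2}$ times a cubic and comparing coefficients with $(x+t\mu)(x-\mu)^{t}$, which gives $p_1p_{t+1}=\mu^2$ and $p_1+p_{t+1}=-2\mu$, hence $p_1=p_{t+1}=-\mu$; your root-multiplicity/derivative version of the same computation also closes that gap (just note that since $\mu<0$ the repeated root is $x=\mu$, so the factor is $(x-\mu)^{r-1}$, not $(x+\mu)^{r-1}$). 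A minor difference in your favor: deducing integrality at the end from the part sizes being the integer $-\mu$ lets you bypass the paper's appeal to Lemma \ref{teo8} and the rational root theorem in part (ii).
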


\begin{proof}\renewcommand{\qedsymbol}{}
It follows from Lemma \ref{teo1} that $G$ is a complete multipartite graph. 

If $\mu=-\lambda$ then $G$ is bipartite by Lemma \ref{lemanovo}, and the converse follows from the fact that the spectrum of the complete bipartite graph $K_{p,q}$ is $\Big\{\left[\sqrt{pq}\right]^1, \left[0 \right]^{p+q-2},\left[-\sqrt{pq}\right]^1\Big\}$. 

Suppose that $t\geq 2$ and  $G$ is an integral complete $\left(t+1\right)$-partite graph with all parts of size $-\mu$.  Then by Lemma \ref{lemacom} the characteristic polynomial of $G$ is
\begin{align*}
P_G\left(x\right)&=x^{n-(t+1)}\left[\prod_{i=1}^{t+1}\left(x-\mu\right) -\sum_{i=1}^{t+1}\left(-\mu\right)\prod_{\substack{j=1\\j\neq i}}^{t+1}\left(x-\mu\right) \right]\\
&=x^{n-t-1}\left[\left(x-\mu\right)^{t+1}+\left(t+1\right) \mu\left(x-\mu\right)^{t}\right]\\
&=x^{n-t-1}\left(x-\mu\right)^{t}\left(x+t\mu\right).
\end{align*}

Hence $Spec\left(G\right)=\left\lbrace \left[-t\mu\right]^1,\left[0\right]^{n-t-1},\left[\mu\right]^{t}\right\rbrace$ and so $-\lambda = t\mu \neq \mu$. 

Conversely, since the eigenvalues add up to $0$ we have $\lambda+t\mu=0$. Thus if  $\mu\neq -\lambda$ then \mbox{$t\geq 2$} and, by Lemma \ref{lemanovo}, $G$ is not bipartite. It follows from Lemma \ref{teo8} that $\lambda\in\mathbb{Z}$, consequently $\mu\in\mathbb{Q}$. The rational root theorem implies that $\mu\in\mathbb{Z}$, as the characteristic polynomial of $G$ is monic with integral coefficients. Then $G$ is integral. Besides, since the multiplicity of $0$ as an eigenvalue of $G$ is $n-\left(t+1\right)$, by Lemma \ref{lemacom} we conclude that $G$ has $r=t+1$ parts. The $r-1$ negative eigenvalues of $G$ are all equal to $\mu$, hence by  Lemma \ref{lemacom} we have $$p_1\leq -\mu\leq p_2\leq -\mu\leq p_3\leq\cdots\leq p_{t}\leq -\mu\leq p_{t+1},$$
where $p_1 \leq p_2 \leq \dots \leq p_{t+1}$ are the sizes of the parts of $G$. Therefore $p_2=p_3=\cdots=p_{t}=-\mu$ and $p_1\leq -\mu\leq p_{t+1}$. Lemma \ref{lemacom} also implies that the characteristic polynomial of $G$ is
\begin{align*}
P_G\left(x\right)
= & x^{n-t-1}\left[\left(x+p_1\right)\left(x-\mu\right)^{t-1}\left(x+p_{t+1}\right)-p_1\left(x-\mu\right)^{t-1}\left(x+p_{t+1}\right)\right.\\
&\left.+\left(t-1\right)\mu\left(x+p_1\right)\left(x-\mu\right)^{t-2}\left(x+p_{t+1}\right)-p_{t+1}\left(x+p_1\right)\left(x-\mu\right)^{t-1}\right]\\
= & x^{n-t-1}\left(x-\mu\right)^{t-2}\left[x^3+\left(t-2\right)\mu x^2+\left((t-1)\mu\left(p_1+p_{t+1}\right)-p_1p_{t+1}\right)x\right.\\
& \left.+t\mu p_1p_{t+1}\right].
\end{align*}

On the other hand, from the spectrum of $G$ and the fact that $\lambda = -t\mu$ we have
\begin{align*}
P_G\left(x\right) & =  x^{n-t-1}\left(x+t\mu\right) \left(x-\mu\right)^t\\
   & =  x^{n-t-1}\left(x-\mu\right)^{t-2}\left[x^3+\left(t-2\right)\mu x^2+\left(1-2t\right)\mu^2 x+t\mu^3\right].
\end{align*}

Hence
\begin{equation}\label{eqn11}
\left(t-1\right)\mu\left(p_1+p_{t+1}\right)-p_1p_{t+1}=\left(1-2t\right)\mu^2
\end{equation}
and
\begin{equation}\label{eqn10}
t\mu p_1p_{t+1}=t\mu^3.
\end{equation}  			

From (\ref{eqn10}) we get $p_1p_{t+1}=\mu^2$. Replacing that into (\ref{eqn11}) we obtain $p_1+p_{t+1}=-2\mu$. It follows that $p_{1}=p_{t+1}=-\mu$. Then $G$ is a complete multipartite integral graph with $t+1$ parts of size $-\mu$. \QEDA

\end{proof}

%%%%%%%%%%%%%%%%%%%%%%%%%%%%%%%%%%%%%%%%%%%%%%%%%%%%%%%%%%%%%%%%%%%%%%%%%%%%%%%%%%

\subsection{0 is not an eigenvalue}\label{subsecao2}

We now consider connected graphs in $\mathcal{G}_n$ with spectrum $\left\lbrace\left[\lambda\right]^1,\left[\mu\right]^{n-t-1},\left[-\mu\right]^t\right\rbrace$, where $ 1\leq t \leq n-2$ and $\lambda>\mu>0$. Recall that all graphs in the family $\mathcal{H}_n \subset \mathcal{G}_n$ have spectrum of this form. Again we start with some auxiliary results and definitions.

A graph is \textit{regular} if all its vertices have the same degree. A regular graph with vertices of degree $r$ is called an \textit{$r$-regular graph}. It is well known that if $G$ is an $r$-regular graph then $r$ is the largest eigenvalue of $G$ and its multiplicity is equal to the number of connected components of $G$. A graph of order $n$ is called \textit{strongly regular} with parameters $\left(n,r,\alpha,\beta\right)$ (or a $srg\left(n,r,\alpha,\beta\right)$ for short) when it is $r$-regular, any two vertices have exactly $\alpha$ or $\beta$ common neighbors depending whether they are adjacent or nonadjacent, and the graph is neither complete, nor empty. A $srg\left(n,r,\alpha,\alpha\right)$ is often called a \textit{design graph}. 

\begin{lema}\label{lemasrg} \cite[Corollary 3.4.11]{Stani}
If $G$ is a strongly regular graph with parameters $(n, r,\alpha, \beta)$ then 
 $Spec\left(G\right) = \left\{[r]^1, [\lambda_2]^{m_2}, [\lambda_3]^{m_3}\right\}$, where 
 
 \begin{center}
$\lambda_2 = \frac{\left(\alpha-\beta\right)+\sqrt{\left(\alpha-\beta\right)^2+4\left(r-\beta\right)}}{2} $\hspace{0.25cm}, \hspace{0.25cm} $\lambda_3 =  \frac{\left(\alpha-\beta\right)-\sqrt{\left(\alpha-\beta\right)^2+4\left(r-\beta\right)}}{2}$,

\vspace{0.25cm}
$m_2 =\frac{1}{2}\left(n-1-\frac{2r+\left(n-1\right)\left(\alpha-\beta\right)}{\sqrt{\left(\alpha-\beta\right)^2+4\left(r-\beta\right)}}\right)$ \hspace{0.25cm} and \hspace{0.25cm} $m_3=\frac{1}{2}\left(n-1+\frac{2r+\left(n-1\right)\left(\alpha-\beta\right)}{\sqrt{\left(\alpha-\beta\right)^2+4\left(r-\beta\right)}}\right)$.
 \end{center}
\end{lema}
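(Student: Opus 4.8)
The plan is to derive everything from the quadratic equation that the adjacency matrix of a strongly regular graph satisfies. Let $A=A(G)$ and let $J$ denote the all-ones matrix of order $n$. Counting, for each pair of vertices, the number of common neighbors according to whether the vertices coincide, are adjacent, or are nonadjacent, one obtains the identity
$$A^2=rI+\alpha A+\beta\left(J-I-A\right)=\left(r-\beta\right)I+\left(\alpha-\beta\right)A+\beta J.$$
First I would observe that, $G$ being $r$-regular, the all-ones vector $\mathbf{1}$ is an eigenvector of $A$ with eigenvalue $r$; since the setting is that of connected strongly regular graphs, Perron--Frobenius makes this eigenvalue simple, which accounts for the block $[r]^1$.

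Next I would restrict to the $A$-invariant subspace $\mathbf{1}^{\perp}$, on which $J$ acts as $0$. There the identity reduces to $A^2=(r-\beta)I+(\alpha-\beta)A$, so every eigenvalue $\theta\ne r$ of $A$ satisfies
$$\theta^2-(\alpha-\beta)\theta-(r-\beta)=0.$$
Because $G$ is not complete, the discriminant $(\alpha-\beta)^2+4(r-\beta)$ is positive, and solving this quadratic gives precisely the two values $\lambda_2$ and $\lambda_3$ of the statement. A short argument --- if only one of the two roots occurred then $A$ restricted to $\mathbf{1}^{\perp}$ would be scalar, hence $A$ would be a linear combination of $I$ and $J$, forcing $G$ to be complete or empty --- shows that both roots are actually eigenvalues, so $Spec(G)=\{[r]^1,[\lambda_2]^{m_2},[\lambda_3]^{m_3}\}$ with positive multiplicities.

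Finally, to pin down $m_2$ and $m_3$ I would solve the two linear relations they satisfy, namely $m_2+m_3=n-1$ (the dimension of $\mathbf{1}^{\perp}$) and $r+m_2\lambda_2+m_3\lambda_3=\mathrm{tr}(A)=0$. Substituting $\lambda_2+\lambda_3=\alpha-\beta$ and $\lambda_2-\lambda_3=\sqrt{(\alpha-\beta)^2+4(r-\beta)}$ and rearranging yields exactly
$$m_2=\frac{1}{2}\left(n-1-\frac{2r+(n-1)(\alpha-\beta)}{\sqrt{(\alpha-\beta)^2+4(r-\beta)}}\right),\qquad m_3=\frac{1}{2}\left(n-1+\frac{2r+(n-1)(\alpha-\beta)}{\sqrt{(\alpha-\beta)^2+4(r-\beta)}}\right).$$
I do not anticipate a real obstacle: the proof is a two-line matrix identity followed by elementary linear algebra. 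The only point deserving a word of care is the degenerate case $\lambda_2=r$, which would collapse two blocks of the spectrum; this occurs only for the disconnected strongly regular graphs of the form $mK_k$, which lie outside the connected setting in which the lemma is used, so the displayed spectrum with a simple index is correct.
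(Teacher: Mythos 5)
Your proof is correct and is the standard argument: the paper itself gives no proof of this lemma, citing it from \cite[Corollary 3.4.11]{Stani}, and the textbook derivation is exactly the one you give (the quadratic $A^2=(r-\beta)I+(\alpha-\beta)A+\beta J$, restriction to $\mathbf{1}^{\perp}$, and the two linear conditions $m_2+m_3=n-1$, $r+m_2\lambda_2+m_3\lambda_3=0$). Your closing caveat is also well placed: under the paper's definition a strongly regular graph may be disconnected (e.g.\ $mK_k$, where $\lambda_2=r$ and the index is not simple), so the statement with $[r]^1$ implicitly assumes connectedness, which is indeed the only setting in which the paper invokes the lemma.
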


\begin{exemplo}\label{Shrikhande}
The Shrikhande graph depicted in Figure \ref{grashik} is a $srg\left(16,6,2,2\right)$ with spectrum $\left\lbrace \left[6\right]^1,\left[2\right]^6,\left[-2\right]^9\right\rbrace$.
\begin{figure}[H]
\[\begin{tikzpicture}[scale=0.8]
\vertex[fill] (1) at (-2,2){};
\vertex[fill] (2) at (2,2) {};
\vertex[fill] (3) at (0,1.5) {};
\vertex[fill] (4) at (0,1) {};
\vertex[fill] (5) at (-0.5,0.5) {};
\vertex[fill] (6) at (0.5,0.5) {};
\vertex[fill] (7) at (-1.5,0) {};
\vertex[fill] (8) at (-1,0) {};
\vertex[fill] (9) at (1,0) {};
\vertex[fill] (10) at (1.5,0) {};
\vertex[fill] (11) at (-0.5,-0.5) {};
\vertex[fill] (12) at (0.5,-0.5) {};
\vertex[fill] (13) at (0,-1) {};
\vertex[fill] (14) at (0,-1.5) {};
\vertex[fill] (15) at (-2,-2) {};
\vertex[fill] (16) at (2,-2) {};
\path
(1) edge (2)
(1) edge (3)
(1) edge (4)
(1) edge (8)
(1) edge (7)
(1) edge (15)
(2) edge (3)
(2) edge (4)
(2) edge (9)
(2) edge (10)
(2) edge (16)
(15) edge (7)
(15) edge (8)
(15) edge (13)
(15) edge (14)
(15) edge (16)
(16) edge (9)
(16) edge (10)
(16) edge (13)
(16) edge (14)
(3) edge (8)
(3) edge (5)
(3) edge (6)
(3) edge (9)
(7) edge (4)
(7) edge (5)
(7) edge (11)
(7) edge (13)
(14) edge (11)
(14) edge (8)
(14) edge (9)
(14) edge (12)
(10) edge (13)
(10) edge (12)
(10) edge (4)
(10) edge (6)
(4) edge (11)
(4) edge (12)
(8) edge (6)
(8) edge (12)
(13) edge (5)
(13) edge (6)
(9) edge (5)
(9) edge (11)
(5) edge (6)
(5) edge (11)
(12) edge (6)
(12) edge (11)
;
\end{tikzpicture}\]
\captionof{figure}{Shrikhande graph}
\label{grashik}
\end{figure}
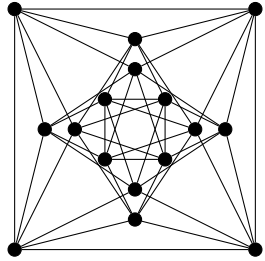
\end{exemplo}

The \textit{line graph} of a graph $G$, denoted by $L(G)$, is a graph such that each vertex represents an edge of $G$ and two vertices are adjacent if and only if their corresponding edges have an endpoint in common in $G$.  If $G$ is $r$-regular, then  $L\left(G\right)$ is $(2r-2)$-regular (see for instance \cite{Brouwer}). Besides, it is observed in \cite{Balinska} that if $G$ is integral, then $L(G)$ is integral, since its characteristic polynomial can be expressed as $ P_{L\left(G\right)}\left(x\right)=\left(x+2\right)^{m-n}P_{G}\left(x-r+2\right)$,  where $m=\left(nr/2\right)$.

\begin{exemplo}
The line graph of the complete bipartite graph $K_{4,4}$,shown in Figure \ref{graphl24}, is a $srg\left(16,6,2,2\right)$ with spectrum $\left\lbrace \left[6\right]^1,\left[2\right]^6,\left[-2\right]^9\right\rbrace$. It is usually called the lattice graph of order $4$ and denoted by $L_2\left(4\right)$. Note that though the Shrikhande graph and the lattice graph $L_2\left(4\right)$ have the same parameters, they are not isomorphic. Indeed, the Shrikhande graph has cycles of length 3 while $L_2\left(4\right)$ does not. These are the only designs with those parameters.

\begin{figure}[H]\centering
\[\begin{tikzpicture}[scale=0.8]
\vertex[fill] (1) at (0,0) {};
\vertex[fill] (2) at (1,0) {};
\vertex[fill] (3) at (2,0) {};
\vertex[fill] (4) at (3,0) {};
\vertex[fill] (5) at (0,-1) {};
\vertex[fill] (6) at (1,-1) {};
\vertex[fill] (7) at (2,-1) {};
\vertex[fill] (8) at (3,-1) {};
\vertex[fill] (9) at (0,-2) {};
\vertex[fill] (10) at (1,-2) {};
\vertex[fill] (11) at (2,-2) {};
\vertex[fill] (12) at (3,-2) {};
\vertex[fill] (13) at (0,-3) {};
\vertex[fill] (14) at (1,-3) {};
\vertex[fill] (15) at (2,-3) {};
\vertex[fill] (16) at (3,-3) {};
\path
(1) edge (2)
(1) edge (5)
(2) edge (3)
(2) edge (6)
(3) edge (4)
(3) edge (7)
(4) edge (8)
(5) edge (6)
(5) edge (9)
(6) edge (7)
(6) edge (10)
(7) edge (8)
(7) edge (11)
(8) edge (12)
(9) edge (10)
(9) edge (13)
(10) edge (11)
(10) edge (14)
(11) edge (12)
(11) edge (15)
(12) edge (16)
(13) edge (14)
(14) edge (15)
(15) edge (16)
(3) edge[bend right=20] (1)
(4) edge[bend right=20] (2)
(4) edge[bend right=20] (1)
(1) edge[bend right=20] (9)
(1) edge[bend right=20] (13)
(5) edge[bend right=20] (13)
(3) edge[bend right=20] (1)
(3) edge[bend right=20] (1)
(3) edge[bend right=20] (1)
(12) edge[bend right=20] (4)
(16) edge[bend right=20] (4)
(16) edge[bend right=20] (8)
(13) edge[bend right=20] (15)
(13) edge[bend right=20] (16)
(14) edge[bend right=20] (16)
(7) edge[bend right=20] (5)
(8) edge[bend right=20] (6)
(8) edge[bend right=20] (5)
(9) edge[bend right=20] (11)
(10) edge[bend right=20] (12)
(9) edge[bend right=20] (12)
(2) edge[bend right=20] (10)
(2) edge[bend right=20] (14)
(6) edge[bend right=20] (14)
(11) edge[bend right=20] (3)
(15) edge[bend right=20] (3)
(15) edge[bend right=20] (7)
;
\end{tikzpicture}\]
\captionof{figure}{$L_{2}\left(4\right)$ graph}
\label{graphl24}
\end{figure}
\end{exemplo}

\begin{lema}\cite{Shrikhande}\label{treesrg} 
A connected regular graph $G$ is strongly regular if and only if it has exactly three distinct eigenvalues.
\end{lema}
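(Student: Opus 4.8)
The plan is to prove both implications from the single polynomial identity relating $A^{2}$, $A$, $I$ and the all-ones matrix $J$, using throughout that for a connected $r$-regular graph the all-ones vector $\mathbf{1}$ spans the eigenspace of the index $r$, which is simple by Perron--Frobenius, while $J$ acts as $0$ on $\mathbf{1}^{\perp}$.

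First I would treat the forward direction. Assume $G$ is an $srg(n,r,\alpha,\beta)$. Counting walks of length $2$ between ordered pairs of vertices and using the definition of strong regularity gives
$$A^{2}=rI+\alpha A+\beta\left(J-I-A\right),\qquad\text{equivalently}\qquad A^{2}-\left(\alpha-\beta\right)A-\left(r-\beta\right)I=\beta J.$$
Applying both sides to an eigenvector $v$ of $A$ with $Av=\theta v$ and $v\perp\mathbf{1}$ (so $Jv=0$) yields $\theta^{2}-\left(\alpha-\beta\right)\theta-\left(r-\beta\right)=0$; hence every eigenvalue of $G$ other than $r$ is one of the (at most two) roots of this quadratic, so $G$ has at most three distinct eigenvalues. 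Since a strongly regular graph is connected, regular, and neither complete nor empty, it is none of the graphs with one or two distinct eigenvalues (the empty graph, $K_{n}$, or $(n/r)K_{r}$), so it has exactly three. One may also simply quote Lemma~\ref{lemasrg}, whose discriminant $\left(\alpha-\beta\right)^{2}+4\left(r-\beta\right)$ is positive for such graphs, making $\lambda_{2}\neq\lambda_{3}$.

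For the converse, assume $G$ is connected and $r$-regular with exactly three distinct eigenvalues $r>\theta_{1}>\theta_{2}$, and set $M=\left(A-\theta_{1}I\right)\left(A-\theta_{2}I\right)$. The two factors commute, so $M$ is symmetric; it annihilates the $\theta_{1}$- and $\theta_{2}$-eigenspaces and satisfies $M\mathbf{1}=\left(r-\theta_{1}\right)\left(r-\theta_{2}\right)\mathbf{1}$. Because $G$ is connected and regular, $r$ is a simple eigenvalue, so $M$ has rank $1$ with single nonzero eigenvalue $\left(r-\theta_{1}\right)\left(r-\theta_{2}\right)$ and eigenvector $\mathbf{1}$; therefore $M=cJ$ with $c=\frac{\left(r-\theta_{1}\right)\left(r-\theta_{2}\right)}{n}>0$. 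Expanding gives $A^{2}=\left(\theta_{1}+\theta_{2}\right)A-\theta_{1}\theta_{2}I+cJ$. Reading off the $(i,j)$-entry for $i\neq j$: if $i\sim j$ the number of common neighbours of $i$ and $j$ equals $\left(\theta_{1}+\theta_{2}\right)+c=:\alpha$, and if $i\not\sim j$ it equals $c=:\beta$, both independent of the chosen pair. As $G$ has three distinct eigenvalues it is neither empty nor complete, so $G$ is strongly regular with parameters $(n,r,\alpha,\beta)$.

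The one point requiring care is the edge-case bookkeeping: upgrading ``at most three'' to ``exactly three'' in the forward direction by excluding the connected regular graphs with one or two distinct eigenvalues, and, in the converse, confirming that $\alpha$ and $\beta$ are genuinely global constants rather than merely constant along walks from a fixed vertex. Beyond elementary linear algebra, the only external input is the Perron--Frobenius fact that the index of a connected regular graph is simple, which is exactly what forces $\operatorname{rank}(M)=1$ and hence $M=cJ$; everything else is direct computation with the polynomial identity in $A$.
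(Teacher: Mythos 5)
Your proof is correct. Note that the paper itself gives no proof of this lemma: it is quoted from the reference \cite{Shrikhande}, so there is no internal argument to compare against. What you wrote is the standard proof: the forward direction reads the quadratic $A^{2}-(\alpha-\beta)A-(r-\beta)I=\beta J$ on $\mathbf{1}^{\perp}$, and the converse shows that $M=(A-\theta_{1}I)(A-\theta_{2}I)$ is a rank-one polynomial in $A$ fixing $\mathbf{1}$, hence equals $\tfrac{(r-\theta_{1})(r-\theta_{2})}{n}J$, from which the two common-neighbour constants are read off entrywise; your handling of the edge cases (excluding one or two distinct eigenvalues, and globality of $\alpha,\beta$ from the matrix identity) is also fine. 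One small inaccuracy worth fixing: a strongly regular graph need not be connected in general (e.g.\ a disjoint union of equal complete graphs is an $srg$ with $\beta=0$), so "a strongly regular graph is connected" is not a consequence of the definition; it is harmless here only because connectedness is a hypothesis of the lemma, and you should invoke it as such rather than derive it.
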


Recall that a \textit{cone over} a graph $G$ is the graph obtained by adding a vertex to $G$ and connecting this vertex  to all vertices of $G$. 

\begin{exemplo} \label{cone}
The cone over the Shrikhande graph and the cone over the lattice graph $L_2\left(4\right)$ are irregular, nonisomorphic and cospectral, with spectrum $\left\lbrace \left[8\right]^1,\left[2\right]^6,\left[-2\right]^{10}\right\rbrace$. (See also Example \ref{ex1})
\end{exemplo}

Given a set $X$ with $n$ elements, called \textit{points}, and integers $b, k, r, \alpha \geq 1$, a \textit{balanced incomplete block design} (\textit{BIBD} or $2$-\textit{design}) is a family of $b$ subsets of $X$, called \textit{blocks}, such that each element of $X$ is contained in $r$ blocks, each block contains $k$ elements, and each pair of elements is simultaneously contained in $\alpha$ blocks. In the particular case $b=n$ (or equivalently $r=k$),  the design is called \textit{symmetric} with parameters $\left(n,r,\alpha\right)$.

A \textit{multiplicative design}, as defined by Ryser \cite{Ryser},
is a family of $n$ subsets of an $n$-set, \mbox{$n\geq3$}, such that the $\left(0,1\right)$ incidence matrix $\widetilde{A}$ satisfies $\widetilde{A}^t\widetilde{A}=D+\alpha\alpha^t,$
where $\alpha=\left(\alpha_1,\ldots,\alpha_n\right)^t$ is a real vector with positive entries and $D$ is a diagonal matrix. Such a design is called \textit{uniform} if $D$ is a scalar matrix, i.e. $D= d I_{n}$ where $d$ is a real number and $I_n$ is the identity matrix of order $n$. According to Bridges and Mena \cite{Bridges4}, a graph whose adjacency matrix $A$ is the incidence matrix $\widetilde{A}$ of a uniform multiplicative design, where $\widetilde{A}$ is symmetric with trace zero, is called a \textit{multiplicative graph}. 

\begin{exemplo}\cite{Bridges}
\label{exemimp} 
The Fano  plane shown in Figure \ref{fanoplane} is a symmetric BIBD with parameters $\left(7,3,1\right)$. Let $\widetilde{X}$ be its $7\times 7$ incidence matrix and let $Y=J_7-\widetilde{X}$, where $J_7$ is the $7 \times 7$ matrix of all ones, and write $\overline{1}$ and $\overline{0}$ to represent the vectors of all ones and all zeros, respectively.

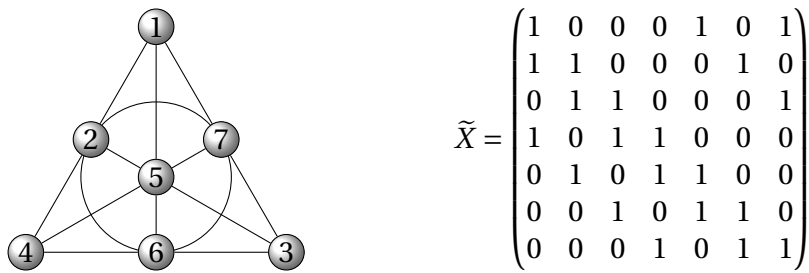
\begin{figure}[H]\centering
	\begin{minipage}{0.4\linewidth}\centering
	\[\begin{tikzpicture}[scale=1]
	\tikzstyle{point}=[ball color=white, circle, draw=black, inner sep=0.05cm]
	\node (v7) at (0,0) [point] {5};
	\draw (0,0) circle (1cm);
	\node (v1) at (90:2cm) [point] {1};
	\node (v2) at (210:2cm) [point] {4};
	\node (v4) at (330:2cm) [point] {3};
	\node (v3) at (150:1cm) [point] {2};
	\node (v6) at (270:1cm) [point] {6};
	\node (v5) at (30:1cm) [point] {7};
	\draw (v1) -- (v3) -- (v2);
	\draw (v2) -- (v6) -- (v4);
	\draw (v4) -- (v5) -- (v1);
	\draw (v3) -- (v7) -- (v4);
	\draw (v5) -- (v7) -- (v2);
	\draw (v6) -- (v7) -- (v1);
	\end{tikzpicture}\]
\end{minipage}
\begin{minipage}{0.4\linewidth}\centering
\[\widetilde{X} =\begin{pmatrix}
	1 & 0 & 0 & 0 & 1 & 0 & 1 \\ 
	1 & 1 & 0 & 0 & 0 & 1 & 0 \\
	0 & 1 & 1 & 0 & 0 & 0 & 1 \\
	1 & 0 & 1 & 1 & 0 & 0 & 0 \\
	0 & 1 & 0 & 1 & 1 & 0 & 0 \\
	0 & 0 & 1 & 0 & 1 & 1 & 0 \\
	0 & 0 & 0 & 1 & 0 & 1 & 1 
	\end{pmatrix}\]
\end{minipage}
\captionof{figure}{Fano plane and its incidence matrix}
\label{fanoplane}
\end{figure}

 The matrix $A$ given by
\begin{center}
	\[A = \left( \begin{array}{@{}c|c@{}}
	0_8 & 
	\begin{matrix}
	\overline{1}^{t} & \overline{0}^{t} \\ 
	\widetilde{X}&  Y\\ 
	\end{matrix}  \\
	\cmidrule[0.4pt]{1-2}
	\begin{matrix}
	\overline{1}  & \widetilde{X}^t \\ 
	\overline{0}  & Y^t\\ 
	\end{matrix}  & 
	\begin{matrix}
	J_7-I_7 & J_7-I_7 \\ 
	J_7-I_7 & J_7-I_7
	\end{matrix}  
	\end{array} \right) 
	\]
\end{center}
is the adjacency matrix of a multiplicative graph $G \in \mathcal{G}_n$ with $22$ vertices of degrees $7$ or $16$ and spectrum $\left\lbrace \left[14\right]^1,\left[2\right]^7,\left[-2\right]^{14}\right\rbrace$, shown in Figure \ref{grapag}.  
Here $A^2=4I+\alpha \alpha^t$ with $\alpha=\Big(\underbrace{\sqrt{3},\ldots,\sqrt{3}}_{8\text{-times}},\underbrace{2\sqrt{3},\ldots,2\sqrt{3}}_{14\text{-times}}\Big)^{t}$. This graph can also be described with  the points and planes of $AG\left(3,2\right)$, the three-dimensional affine space over the field $\mathbb{F}_2$. (See Example \ref{ex2novo})

\begin{figure}[H]\centering
\[\begin{tikzpicture}%[scale=0.5]
\node[
regular polygon,
regular polygon sides=22,
minimum size=4cm,
rotate=180/22,
] (a) {};
\draw[solid]
(a.corner 1) -- (a.corner 9)
(a.corner 1) -- (a.corner 10)
(a.corner 1) -- (a.corner 11)
(a.corner 1) -- (a.corner 12)
(a.corner 1) -- (a.corner 13)
(a.corner 1) -- (a.corner 14)
(a.corner 1) -- (a.corner 15)
(a.corner 2) -- (a.corner 9)
(a.corner 2) -- (a.corner 13)
(a.corner 2) -- (a.corner 15)
(a.corner 2) -- (a.corner 17)
(a.corner 2) -- (a.corner 18)
(a.corner 2) -- (a.corner 19)
(a.corner 2) -- (a.corner 21)
(a.corner 3) -- (a.corner 9)
(a.corner 3) -- (a.corner 10)
(a.corner 3) -- (a.corner 14)
(a.corner 3) -- (a.corner 18)
(a.corner 3) -- (a.corner 19)
(a.corner 3) -- (a.corner 20)
(a.corner 3) -- (a.corner 22)
(a.corner 4) -- (a.corner 10)
(a.corner 4) -- (a.corner 11)
(a.corner 4) -- (a.corner 15)
(a.corner 4) -- (a.corner 16)
(a.corner 4) -- (a.corner 19)
(a.corner 4) -- (a.corner 20)
(a.corner 4) -- (a.corner 21)
(a.corner 5) -- (a.corner 9)
(a.corner 5) -- (a.corner 11)
(a.corner 5) -- (a.corner 12)
(a.corner 5) -- (a.corner 17)
(a.corner 5) -- (a.corner 20)
(a.corner 5) -- (a.corner 21)
(a.corner 5) -- (a.corner 22)
(a.corner 6) -- (a.corner 10)
(a.corner 6) -- (a.corner 12)
(a.corner 6) -- (a.corner 13)
(a.corner 6) -- (a.corner 16)
(a.corner 6) -- (a.corner 18)
(a.corner 6) -- (a.corner 21)
(a.corner 6) -- (a.corner 22)
(a.corner 7) -- (a.corner 11)
(a.corner 7) -- (a.corner 13)
(a.corner 7) -- (a.corner 14)
(a.corner 7) -- (a.corner 16)
(a.corner 7) -- (a.corner 17)
(a.corner 7) -- (a.corner 19)
(a.corner 7) -- (a.corner 22)
(a.corner 8) -- (a.corner 12)
(a.corner 8) -- (a.corner 14)
(a.corner 8) -- (a.corner 15)
(a.corner 8) -- (a.corner 16)
(a.corner 8) -- (a.corner 17)
(a.corner 8) -- (a.corner 18)
(a.corner 8) -- (a.corner 20)
(a.corner 9) -- (a.corner 1)
(a.corner 9) -- (a.corner 2)
(a.corner 9) -- (a.corner 3)
(a.corner 9) -- (a.corner 5)
(a.corner 9) -- (a.corner 10)
(a.corner 9) -- (a.corner 11)
(a.corner 9) -- (a.corner 12)
(a.corner 9) -- (a.corner 13)
(a.corner 9) -- (a.corner 14)
(a.corner 9) -- (a.corner 15)
(a.corner 9) -- (a.corner 17)
(a.corner 9) -- (a.corner 18)
(a.corner 9) -- (a.corner 19)
(a.corner 9) -- (a.corner 20)
(a.corner 9) -- (a.corner 21)
(a.corner 9) -- (a.corner 22)
(a.corner 10) -- (a.corner 1)
(a.corner 10) -- (a.corner 3)
(a.corner 10) -- (a.corner 4)
(a.corner 10) -- (a.corner 6)
(a.corner 10) -- (a.corner 9)
(a.corner 10) -- (a.corner 11)
(a.corner 10) -- (a.corner 12)
(a.corner 10) -- (a.corner 13)
(a.corner 10) -- (a.corner 14)
(a.corner 10) -- (a.corner 15)
(a.corner 10) -- (a.corner 16)
(a.corner 10) -- (a.corner 18)
(a.corner 10) -- (a.corner 19)
(a.corner 10) -- (a.corner 20)
(a.corner 10) -- (a.corner 21)
(a.corner 10) -- (a.corner 22)
(a.corner 11) -- (a.corner 1)
(a.corner 11) -- (a.corner 4)
(a.corner 11) -- (a.corner 5)
(a.corner 11) -- (a.corner 7)
(a.corner 11) -- (a.corner 9)
(a.corner 11) -- (a.corner 10)
(a.corner 11) -- (a.corner 12)
(a.corner 11) -- (a.corner 13)
(a.corner 11) -- (a.corner 14)
(a.corner 11) -- (a.corner 15)
(a.corner 11) -- (a.corner 16)
(a.corner 11) -- (a.corner 17)
(a.corner 11) -- (a.corner 19)
(a.corner 11) -- (a.corner 20)
(a.corner 11) -- (a.corner 21)
(a.corner 11) -- (a.corner 22)
(a.corner 12) -- (a.corner 1)
(a.corner 12) -- (a.corner 5)
(a.corner 12) -- (a.corner 6)
(a.corner 12) -- (a.corner 8)
(a.corner 12) -- (a.corner 9)
(a.corner 12) -- (a.corner 10)
(a.corner 12) -- (a.corner 11)
(a.corner 12) -- (a.corner 13)
(a.corner 12) -- (a.corner 14)
(a.corner 12) -- (a.corner 15)
(a.corner 12) -- (a.corner 16)
(a.corner 12) -- (a.corner 17)
(a.corner 12) -- (a.corner 18)
(a.corner 12) -- (a.corner 20)
(a.corner 12) -- (a.corner 21)
(a.corner 12) -- (a.corner 22)
(a.corner 13) -- (a.corner 1)
(a.corner 13) -- (a.corner 2)
(a.corner 13) -- (a.corner 6)
(a.corner 13) -- (a.corner 7)
(a.corner 13) -- (a.corner 9)
(a.corner 13) -- (a.corner 10)
(a.corner 13) -- (a.corner 11)
(a.corner 13) -- (a.corner 12)
(a.corner 13) -- (a.corner 14)
(a.corner 13) -- (a.corner 15)
(a.corner 13) -- (a.corner 16)
(a.corner 13) -- (a.corner 17)
(a.corner 13) -- (a.corner 18)
(a.corner 13) -- (a.corner 19)
(a.corner 13) -- (a.corner 21)
(a.corner 13) -- (a.corner 22)
(a.corner 14) -- (a.corner 1)
(a.corner 14) -- (a.corner 3)
(a.corner 14) -- (a.corner 7)
(a.corner 14) -- (a.corner 8)
(a.corner 14) -- (a.corner 9)
(a.corner 14) -- (a.corner 10)
(a.corner 14) -- (a.corner 11)
(a.corner 14) -- (a.corner 12)
(a.corner 14) -- (a.corner 13)
(a.corner 14) -- (a.corner 15)
(a.corner 14) -- (a.corner 16)
(a.corner 14) -- (a.corner 17)
(a.corner 14) -- (a.corner 18)
(a.corner 14) -- (a.corner 19)
(a.corner 14) -- (a.corner 20)
(a.corner 14) -- (a.corner 22)
(a.corner 15) -- (a.corner 1)
(a.corner 15) -- (a.corner 2)
(a.corner 15) -- (a.corner 4)
(a.corner 15) -- (a.corner 8)
(a.corner 15) -- (a.corner 9)
(a.corner 15) -- (a.corner 10)
(a.corner 15) -- (a.corner 11)
(a.corner 15) -- (a.corner 12)
(a.corner 15) -- (a.corner 13)
(a.corner 15) -- (a.corner 14)
(a.corner 15) -- (a.corner 16)
(a.corner 15) -- (a.corner 17)
(a.corner 15) -- (a.corner 18)
(a.corner 15) -- (a.corner 19)
(a.corner 15) -- (a.corner 20)
(a.corner 15) -- (a.corner 21)
(a.corner 16) -- (a.corner 4)
(a.corner 16) -- (a.corner 6)
(a.corner 16) -- (a.corner 7)
(a.corner 16) -- (a.corner 8)
(a.corner 16) -- (a.corner 10)
(a.corner 16) -- (a.corner 11)
(a.corner 16) -- (a.corner 12)
(a.corner 16) -- (a.corner 13)
(a.corner 16) -- (a.corner 14)
(a.corner 16) -- (a.corner 15)
(a.corner 16) -- (a.corner 17)
(a.corner 16) -- (a.corner 18)
(a.corner 16) -- (a.corner 19)
(a.corner 16) -- (a.corner 20)
(a.corner 16) -- (a.corner 21)
(a.corner 16) -- (a.corner 22)
(a.corner 17) -- (a.corner 2)
(a.corner 17) -- (a.corner 5)
(a.corner 17) -- (a.corner 7)
(a.corner 17) -- (a.corner 8)
(a.corner 17) -- (a.corner 9)
(a.corner 17) -- (a.corner 11)
(a.corner 17) -- (a.corner 12)
(a.corner 17) -- (a.corner 13)
(a.corner 17) -- (a.corner 14)
(a.corner 17) -- (a.corner 15)
(a.corner 17) -- (a.corner 16)
(a.corner 17) -- (a.corner 18)
(a.corner 17) -- (a.corner 19)
(a.corner 17) -- (a.corner 20)
(a.corner 17) -- (a.corner 21)
(a.corner 17) -- (a.corner 22)
(a.corner 18) -- (a.corner 2)
(a.corner 18) -- (a.corner 3)
(a.corner 18) -- (a.corner 6)
(a.corner 18) -- (a.corner 8)
(a.corner 18) -- (a.corner 9)
(a.corner 18) -- (a.corner 10)
(a.corner 18) -- (a.corner 12)
(a.corner 18) -- (a.corner 13)
(a.corner 18) -- (a.corner 14)
(a.corner 18) -- (a.corner 15)
(a.corner 18) -- (a.corner 16)
(a.corner 18) -- (a.corner 17)
(a.corner 18) -- (a.corner 19)
(a.corner 18) -- (a.corner 20)
(a.corner 18) -- (a.corner 21)
(a.corner 18) -- (a.corner 22)
(a.corner 19) -- (a.corner 2)
(a.corner 19) -- (a.corner 3)
(a.corner 19) -- (a.corner 4)
(a.corner 19) -- (a.corner 7)
(a.corner 19) -- (a.corner 9)
(a.corner 19) -- (a.corner 10)
(a.corner 19) -- (a.corner 11)
(a.corner 19) -- (a.corner 13)
(a.corner 19) -- (a.corner 14)
(a.corner 19) -- (a.corner 15)
(a.corner 19) -- (a.corner 16)
(a.corner 19) -- (a.corner 17)
(a.corner 19) -- (a.corner 18)
(a.corner 19) -- (a.corner 20)
(a.corner 19) -- (a.corner 21)
(a.corner 19) -- (a.corner 22)
(a.corner 20) -- (a.corner 3)
(a.corner 20) -- (a.corner 4)
(a.corner 20) -- (a.corner 5)
(a.corner 20) -- (a.corner 8)
(a.corner 20) -- (a.corner 9)
(a.corner 20) -- (a.corner 10)
(a.corner 20) -- (a.corner 11)
(a.corner 20) -- (a.corner 12)
(a.corner 20) -- (a.corner 14)
(a.corner 20) -- (a.corner 15)
(a.corner 20) -- (a.corner 16)
(a.corner 20) -- (a.corner 17)
(a.corner 20) -- (a.corner 18)
(a.corner 20) -- (a.corner 19)
(a.corner 20) -- (a.corner 21)
(a.corner 20) -- (a.corner 22)
(a.corner 21) -- (a.corner 2)
(a.corner 21) -- (a.corner 4)
(a.corner 21) -- (a.corner 5)
(a.corner 21) -- (a.corner 6)
(a.corner 21) -- (a.corner 9)
(a.corner 21) -- (a.corner 10)
(a.corner 21) -- (a.corner 11)
(a.corner 21) -- (a.corner 12)
(a.corner 21) -- (a.corner 13)
(a.corner 21) -- (a.corner 15)
(a.corner 21) -- (a.corner 16)
(a.corner 21) -- (a.corner 17)
(a.corner 21) -- (a.corner 18)
(a.corner 21) -- (a.corner 19)
(a.corner 21) -- (a.corner 20)
(a.corner 21) -- (a.corner 22)
(a.corner 22) -- (a.corner 3)
(a.corner 22) -- (a.corner 5)
(a.corner 22) -- (a.corner 6)
(a.corner 22) -- (a.corner 7)
(a.corner 22) -- (a.corner 9)
(a.corner 22) -- (a.corner 10)
(a.corner 22) -- (a.corner 11)
(a.corner 22) -- (a.corner 12)
(a.corner 22) -- (a.corner 13)
(a.corner 22) -- (a.corner 14)
(a.corner 22) -- (a.corner 16)
(a.corner 22) -- (a.corner 17)
(a.corner 22) -- (a.corner 18)
(a.corner 22) -- (a.corner 19)
(a.corner 22) -- (a.corner 20)
(a.corner 22) -- (a.corner 21)
;
\fill[radius=1.5pt] \foreach \i in {1, ..., 22} { (a.corner \i) circle[] };
\end{tikzpicture}\]
\captionof{figure}{Graph on the points and planes of $AG\left(3,2\right)$}
\label{grapag}
\end{figure}
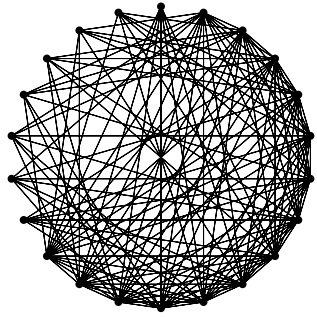
\end{exemplo}

It follows from Briges and Mena  \cite{Bridges4} that  a connected graph in $\mathcal{G}_n$ has three distinct nonzero eigenvalues if and only if it is a multiplicative graph. The result below shows that these graphs are integral with second largest eigenvalue greater than 1, and gives a constructive characterization if they are regular or irregular with second largest eigenvalue $\mu=2$. 

\begin{teo}\label{teoimp3}
A connected graph $G$ of order $n$ has spectrum  $\left\lbrace\left[\lambda\right]^1,\left[\mu\right]^{n-t-1},\left[-\mu\right]^t\right\rbrace$, where $1 \leq t \leq n-2$ and $\lambda>\mu >0$, if and only if $G$ is an integral multiplicative graph with second largest eigenvalue $\mu \geq 2$. Moreover,
\begin{itemize}
\item [(i)]  $G$ is regular if and only if it is a design graph with parameters $\left(n,\lambda,\lambda-\mu^2,\lambda-\mu^2\right)$.
\item [(ii)] $G$ is irregular with $\mu=2$ if and only if it is the cone over the Shrikhande graph,  the cone over the lattice graph $L_2\left(4\right)$ or the graph on the points and planes of $AG\left(3,2\right)$.
\item[(iii)] If $G$ is irregular with $\mu\geq 3$ then $G$ has more than $30$ vertices.
\end{itemize}
\end{teo}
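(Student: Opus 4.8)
The plan is to prove the main biconditional first and then parts (i)--(iii) in turn, using throughout the Bridges--Mena correspondence recalled just before the statement --- a connected graph in $\mathcal{G}_n$ has three distinct nonzero eigenvalues if and only if it is a multiplicative graph --- together with Lemmas \ref{teo8}, \ref{treesrg} and \ref{lemasrg}. For the main equivalence, suppose first that $G$ is connected with spectrum $\left\lbrace[\lambda]^1,[\mu]^{n-t-1},[-\mu]^t\right\rbrace$ as in the statement. Then $G$ has three distinct nonzero eigenvalues, is connected, and lies in $\mathcal{G}_n$, so $G$ is a multiplicative graph. Since $0\notin Spec(G)$ and $G$ has exactly three distinct eigenvalues, $G$ is not a complete bipartite graph (a complete bipartite graph has either $0$ in its spectrum or only two distinct eigenvalues); hence, by Lemma \ref{teo8}, the index $\lambda$ is an integer. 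Equating $\operatorname{tr}A(G)=0$ yields $\lambda=(2t+1-n)\mu$, so $\mu\in\mathbb{Q}$, and since $P_G$ is monic with integer coefficients the rational root theorem gives $\mu\in\mathbb{Z}$; thus $G$ is integral. If $\mu=1$ the least eigenvalue of $G$ would be $-1$, which forces a connected graph to be complete, i.e. $G=K_n$, whose spectrum has only two distinct values --- a contradiction --- so $\mu\ge2$. Conversely, let $G$ be an integral multiplicative graph with second largest eigenvalue $\mu\ge2$. By Bridges--Mena, $G$ is connected, lies in $\mathcal{G}_n$, and has three distinct nonzero eigenvalues; its index is simple by Perron--Frobenius, and the two remaining eigenvalues are nonzero with the same absolute value, hence equal $\mu$ and $-\mu$, each with multiplicity at least $1$ (if one multiplicity were $0$ then $G$ would be a connected graph with two distinct eigenvalues, i.e. $K_n$, whose second eigenvalue is $-1\ne\mu$). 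Therefore $Spec(G)$ has the stated form.

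For part (i), a connected graph with the displayed spectrum is regular if and only if it is strongly regular (Lemma \ref{treesrg}), say a $srg(n,r,\alpha,\beta)$ with $\lambda_2=\mu$ and $\lambda_3=-\mu$; Lemma \ref{lemasrg} then gives $\alpha-\beta=\lambda_2+\lambda_3=0$ and $-\mu^2=\lambda_2\lambda_3=\beta-r$, so $r=\lambda$ and $\alpha=\beta=\lambda-\mu^2$, i.e. $G$ is the design graph with parameters $\left(n,\lambda,\lambda-\mu^2,\lambda-\mu^2\right)$; the converse follows by substituting $r=\lambda$, $\alpha=\beta=\lambda-\mu^2$ back into Lemma \ref{lemasrg}, which produces eigenvalues $\lambda$ and $\pm\mu$. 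For part (ii), when $\mu=2$ the least eigenvalue of $G$ equals $-2$, so $G$ appears in Van Dam's classification \cite{VANDAM} of connected irregular graphs with three eigenvalues and least eigenvalue $-2$; those on his list whose second largest eigenvalue equals $2$ are precisely the cone over the Shrikhande graph, the cone over $L_2(4)$, and the graph on the points and planes of $AG(3,2)$. The converse is immediate from Examples \ref{cone} and \ref{exemimp}, which present these three graphs as irregular connected graphs with spectra $\left\lbrace[8]^1,[2]^6,[-2]^{10}\right\rbrace$ and $\left\lbrace[14]^1,[2]^7,[-2]^{14}\right\rbrace$, all having $\mu=2$.

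Part (iii) is purely spectral. From $\operatorname{tr}A(G)=0$ we get $\lambda=k\mu$ with $k:=2t+1-n\in\mathbb{Z}$, and $k\ge2$ since $\lambda>\mu>0$; from $\operatorname{tr}A(G)^2=2m$ and the spectrum, $2m=\lambda^2+(n-1)\mu^2=(k^2+n-1)\mu^2$. Because $G$ is connected and irregular, the all-ones vector is not a $\lambda$-eigenvector, so $\lambda>2m/n$ strictly; substituting and clearing denominators gives $n(k-\mu)>\mu(k^2-1)$. The right-hand side is positive (as $k\ge2$), hence $k>\mu$ and $n>\mu(k^2-1)/(k-\mu)$. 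Writing $j:=k-\mu\ge1$ and using $(\mu^2-1)/j+j\ge2\sqrt{\mu^2-1}$ (AM--GM), we obtain $n>2\mu\bigl(\mu+\sqrt{\mu^2-1}\bigr)$, which for $\mu\ge3$ is at least $18+12\sqrt2>30$. Thus $n>30$.

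The step I expect to carry essentially all of the difficulty is the forward direction of part (ii): every other step is a short spectral computation or a one-line appeal to the Bridges--Mena or strongly-regular dictionaries, but isolating exactly those three exceptional graphs ultimately rests on Van Dam's full classification of irregular three-eigenvalue graphs with least eigenvalue $-2$. A self-contained treatment would instead have to exploit the multiplicative identity $A(G)^2=4I+\alpha\alpha^{t}$ --- which already forces, for instance, diameter at most two and $\alpha_i\in\sqrt{m}\,\mathbb{Z}$ for a fixed squarefree $m$ --- and rebuild enough of that classification by hand.
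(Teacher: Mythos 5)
Your proof is correct, and for the main equivalence and parts (i)--(ii) it follows essentially the paper's route: Bridges--Mena for the multiplicative correspondence, Lemma \ref{teo8} plus the trace relation $\lambda=(2t+1-n)\mu$ and the rational root theorem for integrality, Lemmas \ref{treesrg} and \ref{lemasrg} for the design-graph dictionary in (i), and Van Dam's classification \cite{VANDAM} for (ii) --- exactly the source the paper invokes (its Theorem 7 and Table II), so your closing assessment of where the real difficulty sits matches the paper's own reliance. Two steps genuinely differ. First, for $\mu\ge 2$ the paper argues via diameter two, an induced $P_3$ and Cauchy interlacing to get $-\mu\le-\sqrt{2}$, while you rule out $\mu=1$ by the fact that least eigenvalue $-1$ forces a connected graph to be complete; both are fine (your fact is the same interlacing observation in disguise). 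Second, and more substantially, for (iii) the paper again just inspects Van Dam's list of graphs on at most $30$ vertices, whereas you give a self-contained spectral argument: irregularity gives the strict bound $\lambda>2m/n$, which with $\lambda=k\mu$ and $2m=(k^2+n-1)\mu^2$ yields $n(k-\mu)>\mu(k^2-1)$, hence $k>\mu$ and, by AM--GM in $j=k-\mu$, $n>2\mu\bigl(\mu+\sqrt{\mu^2-1}\bigr)\ge 18+12\sqrt{2}>30$ for $\mu\ge 3$. I checked the algebra and it is sound; this buys something the paper's proof does not: (iii) becomes independent of the classification, comes with an explicit lower bound growing like $4\mu^2$ (in fact $n\ge 35$ already for $\mu=3$), while the paper's approach buys uniformity, settling (ii) and (iii) by a single inspection of \cite{VANDAM}.
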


\begin{proof}\renewcommand{\qedsymbol}{}
The fact that $G$ has spectrum of the form as stated above if and only if it is multiplicative is a result from  Bridges and Mena \cite[Theorem 4.1]{Bridges4}. In addition, since the spectrum of $G$ is not symmetric about zero, Lemma \ref{lemanovo} implies that $G$ is not bipartite. It follows from Lemma \ref{teo8} that $\lambda\in\mathbb{Z}$. On the other hand, since $\lambda+\left(n-t-1\right)\mu-t\mu=0$, we have $\lambda=\left(2t-n+1\right)\mu$. Hence $\mu\in\mathbb{Q}$, and the rational root theorem implies that $\mu\in\mathbb{Z}$. Then $G$ is integral. 
Beside that, since $G$ has three distinct eigenvalues, its diameter is two and so $G$ contains the path on three vertices $P_3$ as an induced subgraph. Thus, by Cauchy's interlacing theorem, $-\mu\leq-\sqrt{2}$. Therefore $\mu \geq 2$.

Suppose that $G$ is regular. Hence $G$ is a  $srg\left(n,\lambda,\alpha,\beta\right)$ with $\mu = \frac{\left(\alpha-\beta\right)+\sqrt{\left(\alpha-\beta\right)^2+4\left(\lambda-\beta\right)}}{2}$ and $-\mu = \frac{\left(\alpha-\beta\right)-\sqrt{\left(\alpha-\beta\right)^2+4\left(\lambda-\beta\right)}}{2}$, by Lemmas \ref{treesrg} and \ref{lemasrg}. Therefore $\alpha=\beta$ and so $\mu =\sqrt{\lambda-\beta}$. Thus $G$ is a design graph with parameters $\left(n,\lambda,\lambda-\mu^2,\lambda-\mu^2\right)$. The converse is true since every design graph is a regular graph. 
Now suppose that $G$ is irregular. Van Dam \cite[Theorem 7 and Table II]{VANDAM} characterized  all connected  graphs with three distinct eigenvalues, which are not strongly regular or complete bipartite, that have at most 30 vertices or with each eigenvalue at least $-2$. Statements $(ii)$ and $(iii)$ follow by inspecting the spectrum of each graph in his results. \QEDA 

\end{proof}

Although the characterization given in Theorem \ref{teoimp3} is not constructive in case $(iii)$, it seems that to obtain such a  characterization for  all irregular graphs in $\mathcal{G}_n$ with three nonzero distinct eigenvalues is hard to accomplish, since we can obtain infinite families of irregular connected graphs in $\mathcal{G}_n$ with spectrum of this form. Below we present two such families.

\begin{exemplo}\label{ex1}

Ahrens and Szekeres \cite{Ahrens} showed that there exist strongly regular graphs with parameters $\left(\alpha^3 + 2\alpha^2,\alpha^2+\alpha,\alpha,\alpha\right)$ for all prime power values of $\alpha$. On the other hand, it follows from Abreu et al. \cite[Proposition 3]{AbreuN} that a cone $C_{\alpha}$ over a connected $\left(\alpha^2+\alpha\right)$-regular multiplicative graph $G$ with three distinct eigenvalues $\alpha^2+\alpha>\alpha>-\alpha$ is multiplicative if and only if $G$ is strongly regular with parameters $\left(\alpha^3+2\alpha^2,\alpha^2+\alpha,\alpha,\alpha\right)$. 
In this case, by \cite[Lemma 4.1]{MUZY}
\begin{equation}\label{coneeq}
Spec\left(C_{\alpha}\right)=\left\lbrace\left[\alpha^2+2\alpha\right]^1,\left[\alpha\right]^{\frac{\alpha^3+2\alpha^2-\alpha-2}{2}},\left[-\alpha\right]^ {\frac{\alpha^3+2\alpha^2+\alpha+2}{2}}\right\rbrace.
\end{equation}

Hence for every prime power $\alpha$ a cone $C_{\alpha}$ over $G=srg\left(\alpha^3 + 2\alpha^2,\alpha^2+\alpha,\alpha,\alpha\right)$ is an irregular graph in $\mathcal{G}_n$ with three nonzero distinct eigenvalues. The examples with smallest order in this infinite family, obtained using $\alpha=2$, are the cone over the Shrikhande graph and the cone over the lattice graph $L_2\left(4\right)$, which have $17$ vertices. Note that with $\alpha\geq 3$ we obtain cones with at least $46$ vertices.
\end{exemplo}

\begin{exemplo}\label{ex2novo}
Van Dam \cite{VANDAM} introduced another family with infinitely many irregular graphs in $\mathcal{G}_n$ with three nonzero distinct eigenvalues, which are not cones.  The \textit{incidence graph} of a BIBD with $n$ points and $b$ blocks is the bipartite graph of order $n+b$ with two vertices adjacent if and only if one corresponds to a block and the other corresponds to an element contained in that block. Each graph in the family presented in \cite{VANDAM} is \mbox{constructed} from the incidence graph of a BIBD with parameters $\left(q^3,q^2,q+1\right)$ on the points and planes of $AG\left(3,q\right)$, the three-dimensional affine space over $\mathbb{F}_q$, by  adding an edge between two blocks if they intersect (in $q$ points). 
The construction yields an irregular graph with three distinct eigenvalues with spectrum $$\left\lbrace\left[q^3+q^2+q\right]^1,\left[q\right]^{q^3-1},\left[-q\right]^{q^3+q^2+q}\right\rbrace.$$

The smallest example in this infinite family, obtained with $q=2$, is the graph depicted in Figure \ref{grapag}. Note that if $q\geq 3$ we obtain graphs with at least $66$ vertices.
\end{exemplo}
 
%%%%%%%%%%%%%%%%%%%%%%%%%%%%%%%%%%%%%%%%%%%%%%%%%%%%%%%%%%%%%%%%%%%%%%%%%

\section{Four distinct eigenvalues}\label{sec:III-I}

In this section we study connected graphs in $\mathcal{G}_n$ that have exactly four distinct eigenvalues, i.e. with spectrum $\left\lbrace\left[\lambda\right]^1,\left[\mu\right]^{n-k-t-1},\left[0\right]^{t},\left[-\mu\right]^k\right\rbrace$, where $t,k \geq 1$, $t +k \leq n-2$, and $\lambda>\mu>0.$  We characterize these graphs in the case their are regular with at least two simple eigenvalues and present infinite families of graphs in the other cases.

The incidence graphs of symmetric balanced incomplete block designs are examples of regular graphs with four distinct eigenvalues. 
In fact these are the only bipartite graphs with four distinct eigenvalues.

\begin{lema}\cite{Cvetkovic}\label{lema2}
A connected bipartite regular graph $G$ with four distinct eigenvalues is the incidence graph of a symmetric \textit{BIBD} with parameters $\left(n,r,\alpha\right)$. The spectrum of $G$ is given by $$\left\lbrace\left[r\right]^1,\left[\sqrt{r-\alpha}\right]^{n-1},
\left[-\sqrt{r-\alpha}\right]^{n-1},\left[-r\right]^{1}\right\rbrace.$$\end{lema}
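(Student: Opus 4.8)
The plan is to convert the spectral hypothesis into design-theoretic structure on the biadjacency matrix of $G$. First I would extract the shape of the spectrum. Since $G$ is connected and bipartite, Lemma~\ref{lemanovo} gives that $Spec(G)$ is symmetric about $0$; since $G$ is connected and $r$-regular, $r$ is a simple eigenvalue (Perron--Frobenius), hence $-r$ is a (simple) eigenvalue as well. A graph with four distinct eigenvalues whose spectrum is symmetric about $0$ and contains $\pm r$ must therefore have distinct eigenvalues $r>\theta>0>-\theta>-r$ (a zero eigenvalue would force a fifth distinct value), and the multiplicities of $\theta$ and $-\theta$ coincide, say both equal $m$. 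A connected bipartite $r$-regular graph has both colour classes of the same size, say $n$, so $|V(G)|=2n$; counting eigenvalues yields $2n=2+2m$, i.e. $m=n-1$. Writing $A=\left(\begin{smallmatrix}0&N\\ N^{T}&0\end{smallmatrix}\right)$ with $N$ an $n\times n$ $(0,1)$-matrix, the $r$-regularity of $G$ says exactly that every row and every column of $N$ sums to $r$.

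The second step is to pin down $NN^{T}$. From the block form, $A^{2}=\operatorname{diag}(NN^{T},N^{T}N)$, so the spectrum of the $n\times n$ matrix $NN^{T}$ is a sub-multiset of that of $A^{2}$, namely it consists of the values $r^{2}$ and $\theta^{2}$ only. The all-ones vector $\mathbf 1$ is an eigenvector of $NN^{T}$ for $r^{2}$, and I claim this is the whole $r^{2}$-eigenspace: if $NN^{T}v=r^{2}v$ then $\bigl(v,\tfrac1r N^{T}v\bigr)$ is an eigenvector of $A$ for the simple eigenvalue $r$, so the $r^{2}$-eigenspace of $NN^{T}$ injects into a one-dimensional space (equivalently, connectedness makes $NN^{T}$ irreducible, so its Perron value $r^{2}$ is simple). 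Hence $NN^{T}$ has spectrum $\{[r^{2}]^{1},[\theta^{2}]^{n-1}\}$ with $\mathbf 1$ spanning the top eigenspace, which forces $NN^{T}=\theta^{2}I+\tfrac{r^{2}-\theta^{2}}{n}J$.

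The third step reads off the combinatorics. Put $\alpha:=\tfrac{r^{2}-\theta^{2}}{n}$. This is an off-diagonal entry of $NN^{T}$, hence a nonnegative integer, and $\alpha>0$ because $0<\theta<r$; so $\alpha\ge1$. Each diagonal entry of $NN^{T}$ equals the corresponding row sum of the $(0,1)$-matrix $N$, i.e. $r$, so $\theta^{2}+\alpha=r$, that is $\theta^{2}=r-\alpha$. Now regard the $n$ rows of $N$ as the points and its $n$ columns as the blocks of an incidence structure: every point lies in $r$ blocks, every block contains $r$ points, and any two distinct points lie in exactly $\alpha\ge1$ common blocks. This is precisely a symmetric $2$-$(n,r,\alpha)$ design (a symmetric BIBD with parameters $(n,r,\alpha)$), and $G$ is its incidence graph. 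Finally $\theta=\sqrt{r-\alpha}$, so the spectrum of $G$ is $\{[r]^{1},[\sqrt{r-\alpha}]^{n-1},[-\sqrt{r-\alpha}]^{n-1},[-r]^{1}\}$, as claimed.

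The main obstacle is the multiplicity-one assertion for $r^{2}$ as an eigenvalue of $NN^{T}$ in the second step: it is here (and not in the mere regularity) that connectedness is essential, and one has to argue carefully with the passage between eigenvectors of $NN^{T}$, of $N^{T}N$ and of $A$. A secondary point deserving a remark is that the design obtained is nondegenerate, i.e. $\alpha\ge1$ rather than $\alpha=0$; this is automatic since $\alpha$ is a positive integer, but it is worth noting because $\alpha=0$ would correspond to $G$ being a disjoint union of edges, which is excluded both by connectedness and by having four distinct eigenvalues.
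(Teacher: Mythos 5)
Your proof is correct. The paper does not prove this lemma at all---it is quoted from the reference \cite{Cvetkovic}---and your argument (passing to the biadjacency matrix $N$, using bipartite symmetry of the spectrum and simplicity of the Perron eigenvalue to force $NN^{T}=\theta^{2}I+\alpha J$, then reading off the parameters of a symmetric BIBD) is exactly the standard proof of this classical result, so it matches the cited source rather than deviating from anything in the paper.
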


\begin{exemplo}\cite{VanDam1995}\label{ex34}
The graph obtained by removing a perfect matching from the complete bipartite graph $K_{\ell,\ell}$, denoted by $K^{-}_{\ell, \ell}$, is the incidence graph of a symmetric \textit{BIBD} with parameters $\left(\ell,\ell-1,\ell -2\right)$, for $\ell>2$. By Lemma \ref{lema2}, $$Spec\left(K^{-}_{\ell, \ell} \right) = \left\lbrace\left[\ell -1 \right]^1,\left[1\right]^{\ell-1},\left[-1\right]^{\ell -1},\left[-\ell + 1 \right]^1 \right\rbrace.$$
We note that  $K_{4,4}^{-}$ is the cubical graph $Q_3$ formed by the 8 vertices and 12 edges of a three-dimensional cube.
\end{exemplo}

Another family of connected regular graphs with four distinct eigenvalues, uniquely determined by their spectrum,  was given by Van Dam \cite{VanDam1995}. The graphs in the family are obtained  by a product construction with the graph $K^{-}_{\ell, \ell}$. Recall that the \textit{Kronecker product} $R\otimes S$ of the matrices $R=\left(r_{ij}\right)_{c\times d}$ and $S=\left(s_{ij}\right)_{p\times q}$ is the $cp\times dq$ matrix obtained from $R$ by replacing each element $
r_{ij}$ with the block $r_{ij}S$. Given a graph $G$ of order $n$ with adjacency matrix $A$, we denote by  $G\otimes J_m$ the graph with adjacency matrix $A\otimes J_m$, and by  $G\circledast J_m$ the graph with adjacency matrix $A\circledast J_m=\left(A+I_n\right)\otimes J_m-I_{nm}$, where $I_n$ denotes the identity matrix of size $n$ and $J_m$ represents the $m \times m$ matrix of all ones. Note that $G \otimes J_1 = G = G\circledast J_1$ and $\overline{G\otimes J_m} = \overline{G} \circledast J_m$, where $\overline{G}$  is the complement of $G$. In addition, if $G$ is connected and regular, then $G\otimes J_m$ and $G\circledast J_m$ are connected and regular. 

\begin{lema}\label{lema31}
Let $G$ be a graph of order $n$ with index $\lambda_1$, spectrum $\left\lbrace\left[\lambda_1\right]^{m_1}, \left[\lambda_2\right]^{m_2}, \ldots, \left[\lambda_t\right]^{m_t}\right\rbrace$, and complement $\overline{G}$. Then
\begin{enumerate}
\item[$(i)$] If $G$ is regular then  $\overline{G}$
is regular with spectrum $$\left\lbrace\left[n-1-\lambda_1\right]^{m_1}, \left[-\lambda_{2}-1\right]^{m_2}, \ldots, \left[-\lambda_{t}-1\right]^{m_t}\right\rbrace.$$

\item[$(ii)$] $G$ is regular if and only if $G\otimes J_m$ is regular, for all $m \geq 1$. Moreover,
 $$Spec\left(G\otimes J_m\right)= \left\lbrace\left[m\lambda_1\right]^{m_1}, \ldots, \left[m\lambda_t\right]^{m_t}, \left[0\right]^{n\left(m-1\right)}\right\rbrace.$$

\item[$(iii)$] $G$ is regular if and only if $G\circledast J_m$ is regular, for all $m \geq 1$. Moreover,
 $$Spec\left(G\circledast J_m\right)= \left\lbrace\left[m\lambda_1+m-1\right]^{m_1}, \ldots, \left[m\lambda_t+m-1\right]^{m_t}, \left[-1\right]^{n\left(m-1\right)}\right\rbrace.$$
\end{enumerate}
\end{lema}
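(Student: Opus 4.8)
The plan is to reduce each statement to the action of the relevant adjacency matrix on an explicit eigenbasis: the all-ones vector handles the regularity claims, and the standard multiplicativity of the Kronecker product on eigenpairs handles the spectra.

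For $(i)$, I would use $A(\overline{G}) = J_n - I_n - A$, where $J_n$ is the $n \times n$ all-ones matrix. Regularity of $G$ gives $A\mathbf{1} = \lambda_1 \mathbf{1}$, and since $J_n\mathbf{1} = n\mathbf{1}$ we get $A(\overline{G})\mathbf{1} = (n - 1 - \lambda_1)\mathbf{1}$, so $\overline{G}$ is $(n-1-\lambda_1)$-regular. Fixing an orthonormal eigenbasis of the symmetric matrix $A$ containing $n^{-1/2}\mathbf{1}$, every other basis vector $v$ is orthogonal to $\mathbf{1}$, hence $J_n v = 0$, so $Av = \lambda_i v$ forces $A(\overline{G})v = -(\lambda_i + 1)v$; reading off multiplicities produces the displayed spectrum. (This is exactly the stated list when $\lambda_1$ is simple, which holds in all of our applications because those graphs are connected; when $\lambda_1$ has multiplicity $m_1 > 1$, the $m_1 - 1$ eigenvectors of its eigenspace that are orthogonal to $\mathbf{1}$ instead contribute the value $-(\lambda_1 + 1)$.)

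For the regularity equivalences in $(ii)$ and $(iii)$ I would simply compute row sums. The row of $A \otimes J_m$ indexed by a pair $(i,k)$ sums to $\sum_j A_{ij} \sum_\ell (J_m)_{k\ell} = m\deg_G(i)$, so for each $m \geq 1$ the graph $G \otimes J_m$ is regular if and only if $\deg_G$ is constant, i.e. if and only if $G$ is regular, the common degree then being $m\lambda_1$. For $G \circledast J_m$ I would first record that $A \circledast J_m = (A + I_n)\otimes J_m - I_{nm}$ really is an adjacency matrix: its diagonal blocks equal $(A_{ii}+1)J_m = J_m$, so after subtracting $I_{nm}$ the diagonal vanishes, while every entry stays $0$ or $1$ and the matrix stays symmetric; its $(i,k)$-row sum is $m(\deg_G(i) + 1) - 1$, giving the same equivalence with common degree $m\lambda_1 + m - 1$.

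For the spectra I would invoke the standard fact (see e.g. \cite{Horn}) that for real symmetric matrices $R, S$ the eigenvalues of $R \otimes S$ are, with multiplicity, the products of an eigenvalue of $R$ with an eigenvalue of $S$, together with the fact that the spectrum of $J_m$ is $\{[m]^1, [0]^{m-1}\}$. Then $A \otimes J_m$ has the eigenvalue $m\lambda_i$ with multiplicity $m_i$ for each $i$ (from the eigenvalue $m$ of $J_m$) and the eigenvalue $0$ with total multiplicity $n(m-1)$; since $\lambda_1 \geq |\lambda_i|$ for all $i$, the value $m\lambda_1$ is the largest, hence the index, which gives $(ii)$. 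Likewise $(A+I_n) \otimes J_m$ has eigenvalues $m(\lambda_i + 1)$ with multiplicity $m_i$ and $0$ with total multiplicity $n(m-1)$, and subtracting $I_{nm}$ shifts every eigenvalue by $-1$, producing $m\lambda_i + m - 1$ (multiplicity $m_i$) and $-1$ (multiplicity $n(m-1)$), with $m\lambda_1 + m - 1$ largest because $\lambda_1 + 1 \geq \lambda_i + 1 > 0$, in agreement with the degree found above. The lemma is essentially routine; the only points deserving attention are the verification that $A \circledast J_m$ is a genuine adjacency matrix and that the claimed index is in each case the \emph{maximum} eigenvalue, while the ``only if'' directions require no separate argument since the displayed row-sum formulas are non-constant whenever $G$ is irregular.
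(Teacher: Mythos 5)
Your proof is correct, and for the spectral formulas it follows the same route as the paper: both arguments rest on the multiplicativity of eigenvalues under the Kronecker product together with $Spec(J_m)=\{[m]^1,[0]^{m-1}\}$, followed by the shift by $-I_{nm}$ for the $\circledast$ construction. Where you differ is in the remaining pieces. For part $(i)$ the paper simply cites Bapat, while you give the short direct argument with $A(\overline{G})=J_n-I_n-A$ and an orthonormal eigenbasis containing $n^{-1/2}\mathbf{1}$; your parenthetical remark that the displayed spectrum is literally correct only when $\lambda_1$ is simple (the extra $m_1-1$ eigenvectors in a disconnected regular graph contribute $-\lambda_1-1$, not $n-1-\lambda_1$) is a genuine and worthwhile observation about the statement, harmless here since the lemma is applied to connected graphs. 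For the regularity equivalences in $(ii)$ and $(iii)$ you argue by row sums of $A\otimes J_m$ and $(A+I_n)\otimes J_m-I_{nm}$, which is more elementary and also settles the ``only if'' directions transparently; the paper instead invokes the spectral characterization of regularity, namely that $G$ is regular if and only if $\frac{1}{n}\sum_i m_i\lambda_i^2=\lambda_1$, and checks that this identity is preserved under the scaling of the spectrum. Both are complete; your version avoids the external criterion at the cost of working with the matrix entries, and your explicit check that $A\circledast J_m$ is a bona fide adjacency matrix (zero diagonal, $0$--$1$ entries) is a detail the paper leaves implicit in the definition.
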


\begin{proof}\renewcommand{\qedsymbol}{}
A proof of $(i)$ is given in \cite[Theorem 6.15]{Bapat}. 
%The eigenvalues of $J_m$ are $m$ together with $m-1$ zeros % \cite[Example 1.3.23]{Horn}. 
 It is shown in \cite[Lemma 3.25]{Bapat} that for any symmetric matrices $A$ and $B$ the eigenvalues of $A \otimes B$ are given by $\beta_i\gamma_j$, $1\leq i \leq n$ and \mbox{$1 \leq j \leq m$,} where $\beta_1, \dots, \beta_n$ and $\gamma_1, \dots, \gamma_m$ are the eigenvalues of $A$ and $B$, respectively. Hence, since $Spec\left(J_m\right) = \left\{\left[m\right]^1, \left[0\right]^{m-1}\right\}$, we get $$Spec\left(G\otimes J_m\right)= \left\lbrace\left[m\lambda_1\right]^{m_1}, \ldots, \left[m\lambda_t\right]^{m_t}, \left[0\right]^{n\left(m-1\right)}\right\rbrace.$$ 
By \cite[Theorem 3.22]{Cvetkovic}, $G$ is regular if and only if $\frac{1}{n} \sum_{i=1}^{t} m_i \lambda_{i}^{2} = \lambda_1 $. In addition, note that since $m \geq 1$ we have
$$ \frac{1}{n} \sum_{i=1}^{t} m_i \lambda_{i}^{2} = \lambda_1 \Longleftrightarrow \frac{1}{nm} \sum_{i=1}^{t} m_i (m\lambda_{i})^{2} = m\lambda_1, $$
which concludes the proof of $(ii)$. The proof of $(iii)$ is similar. \QEDA

\end{proof}

\begin{exemplo}
\label{ex1secao3}
It was proved in \cite{VanDam1995} that $K_{\ell, \ell}^{-}\circledast J_m$ is uniquely determined by its spectrum, for each $\ell$ and $m$. Applying Lemma \ref{lema31} we can easily see that  $K_{\ell, \ell}^{-}\circledast J_m$ has four distinct eigenvalues. In particular, taking $\ell=4$ we obtain an infinite family of connected regular graphs with four distinct eigenvalues, one equals $-1$: $$Spec\left(Q_3\circledast J_m\right)= \left\lbrace\left[4m-1\right]^{1},\left[2m-1\right]^{3},\left[-1\right]^{8m-5}, \left[-2m-1\right]^{1}\right\rbrace,$$ for all $m \geq 1$. This family generates another infinite family of connected regular graphs with four distinct eigenvalues, all belonging to the family $\mathcal{G}_n$. In fact Lemma \ref{lema31} implies that  $\overline{Q_3\circledast J_m}$ is a regular graph with $$Spec\left(\overline{Q_3\circledast J_m}\right)= \left\lbrace\left[4m\right]^{1},\left[2m\right]^{1}, \left[0\right]^{8m-5}, \left[-2m\right]^{3}\right\rbrace,$$ so it is a connected  integral graph in $\mathcal{G}_{n}$, with $n=8m$. Figure \ref{graphKJ} shows the  graphs obtained with $m=1$ and $m=2$.
\end{exemplo}

\begin{figure}[H]\centering
\begin{minipage}{0.3\linewidth}\centering
	\[\begin{tikzpicture}[scale=1]
	\node[
	regular polygon,
	regular polygon sides=8,
	minimum size=2.5cm,
	rotate=180/8,
	] (a) {};
	\draw[solid]
	(a.corner 1) -- (a.corner 2)
	(a.corner 1) -- (a.corner 3)
	(a.corner 1) -- (a.corner 5)
	(a.corner 1) -- (a.corner 7)
	(a.corner 2) -- (a.corner 4)
	(a.corner 2) -- (a.corner 6)
	(a.corner 2) -- (a.corner 8)
	(a.corner 3) -- (a.corner 4)
	(a.corner 3) -- (a.corner 5)
	(a.corner 3) -- (a.corner 7)
	(a.corner 4) -- (a.corner 6)
	(a.corner 4) -- (a.corner 8)
	(a.corner 5) -- (a.corner 6)
	(a.corner 5) -- (a.corner 7)
	(a.corner 6) -- (a.corner 8)
	(a.corner 7) -- (a.corner 8)
	;
	\fill[radius=2.5pt] \foreach \i in {1, ..., 8} { (a.corner \i) circle[] };
	\end{tikzpicture}\]
\end{minipage}
\begin{minipage}{0.3\linewidth}\centering
	\[\begin{tikzpicture}[scale=1]
	\node[
	regular polygon,
	regular polygon sides=16,
	minimum size=3cm,
	rotate=180/16,
	] (a) {};
	\draw[solid]
	(a.corner 1) -- (a.corner 3)
	(a.corner 1) -- (a.corner 4)
	(a.corner 1) -- (a.corner 5)
	(a.corner 1) -- (a.corner 6)
	(a.corner 1) -- (a.corner 9)
	(a.corner 1) -- (a.corner 10)
	(a.corner 1) -- (a.corner 13)
	(a.corner 1) -- (a.corner 14)
	(a.corner 2) -- (a.corner 3)
	(a.corner 2) -- (a.corner 4)
	(a.corner 2) -- (a.corner 5)
	(a.corner 2) -- (a.corner 6)
	(a.corner 2) -- (a.corner 9)
	(a.corner 2) -- (a.corner 10)
	(a.corner 2) -- (a.corner 13)
	(a.corner 2) -- (a.corner 14)
	(a.corner 3) -- (a.corner 1)
	(a.corner 3) -- (a.corner 2)
	(a.corner 3) -- (a.corner 7)
	(a.corner 3) -- (a.corner 8)
	(a.corner 3) -- (a.corner 11)
	(a.corner 3) -- (a.corner 12)
	(a.corner 3) -- (a.corner 15)
	(a.corner 3) -- (a.corner 16)
	(a.corner 4) -- (a.corner 1)
	(a.corner 4) -- (a.corner 2)
	(a.corner 4) -- (a.corner 7)
	(a.corner 4) -- (a.corner 8)
	(a.corner 4) -- (a.corner 11)
	(a.corner 4) -- (a.corner 12)
	(a.corner 4) -- (a.corner 15)
	(a.corner 4) -- (a.corner 16)
	(a.corner 5) -- (a.corner 1)
	(a.corner 5) -- (a.corner 2)
	(a.corner 5) -- (a.corner 7)
	(a.corner 5) -- (a.corner 8)
	(a.corner 5) -- (a.corner 9)
	(a.corner 5) -- (a.corner 10)
	(a.corner 5) -- (a.corner 13)
	(a.corner 5) -- (a.corner 14)
	(a.corner 6) -- (a.corner 1)
	(a.corner 6) -- (a.corner 2)
	(a.corner 6) -- (a.corner 7)
	(a.corner 6) -- (a.corner 8)
	(a.corner 6) -- (a.corner 9)
	(a.corner 6) -- (a.corner 10)
	(a.corner 6) -- (a.corner 13)
	(a.corner 6) -- (a.corner 14)
	(a.corner 7) -- (a.corner 3)
	(a.corner 7) -- (a.corner 4)
	(a.corner 7) -- (a.corner 5)
	(a.corner 7) -- (a.corner 6)
	(a.corner 7) -- (a.corner 11)
	(a.corner 7) -- (a.corner 12)
	(a.corner 7) -- (a.corner 15)
	(a.corner 7) -- (a.corner 16)
	(a.corner 8) -- (a.corner 3)
	(a.corner 8) -- (a.corner 4)
	(a.corner 8) -- (a.corner 5)
	(a.corner 8) -- (a.corner 6)
	(a.corner 8) -- (a.corner 11)
	(a.corner 8) -- (a.corner 12)
	(a.corner 8) -- (a.corner 15)
	(a.corner 8) -- (a.corner 16)
	(a.corner 9) -- (a.corner 1)
	(a.corner 9) -- (a.corner 2)
	(a.corner 9) -- (a.corner 5)
	(a.corner 9) -- (a.corner 6)
	(a.corner 9) -- (a.corner 11)
	(a.corner 9) -- (a.corner 12)
	(a.corner 9) -- (a.corner 13)
	(a.corner 9) -- (a.corner 14)
	(a.corner 10) -- (a.corner 1)
	(a.corner 10) -- (a.corner 2)
	(a.corner 10) -- (a.corner 5)
	(a.corner 10) -- (a.corner 6)
	(a.corner 10) -- (a.corner 11)
	(a.corner 10) -- (a.corner 12)
	(a.corner 10) -- (a.corner 13)
	(a.corner 10) -- (a.corner 14)
	(a.corner 11) -- (a.corner 3)
	(a.corner 11) -- (a.corner 4)
	(a.corner 11) -- (a.corner 7)
	(a.corner 11) -- (a.corner 8)
	(a.corner 11) -- (a.corner 9)
	(a.corner 11) -- (a.corner 10)
	(a.corner 11) -- (a.corner 15)
	(a.corner 11) -- (a.corner 16)
	(a.corner 12) -- (a.corner 3)
	(a.corner 12) -- (a.corner 4)
	(a.corner 12) -- (a.corner 7)
	(a.corner 12) -- (a.corner 8)
	(a.corner 12) -- (a.corner 9)
	(a.corner 12) -- (a.corner 10)
	(a.corner 12) -- (a.corner 15)
	(a.corner 12) -- (a.corner 16)
	(a.corner 13) -- (a.corner 1)
	(a.corner 13) -- (a.corner 2)
	(a.corner 13) -- (a.corner 5)
	(a.corner 13) -- (a.corner 6)
	(a.corner 13) -- (a.corner 9)
	(a.corner 13) -- (a.corner 10)
	(a.corner 13) -- (a.corner 15)
	(a.corner 13) -- (a.corner 16)
	(a.corner 14) -- (a.corner 1)
	(a.corner 14) -- (a.corner 2)
	(a.corner 14) -- (a.corner 5)
	(a.corner 14) -- (a.corner 6)
	(a.corner 14) -- (a.corner 9)
	(a.corner 14) -- (a.corner 10)
	(a.corner 14) -- (a.corner 15)
	(a.corner 14) -- (a.corner 16)
	(a.corner 15) -- (a.corner 3)
	(a.corner 15) -- (a.corner 4)
	(a.corner 15) -- (a.corner 7)
	(a.corner 15) -- (a.corner 8)
	(a.corner 15) -- (a.corner 11)
	(a.corner 15) -- (a.corner 12)
	(a.corner 15) -- (a.corner 13)
	(a.corner 15) -- (a.corner 14)
	(a.corner 16) -- (a.corner 3)
	(a.corner 16) -- (a.corner 4)
	(a.corner 16) -- (a.corner 7)
	(a.corner 16) -- (a.corner 8)
	(a.corner 16) -- (a.corner 11)
	(a.corner 16) -- (a.corner 12)
	(a.corner 16) -- (a.corner 13)
	(a.corner 16) -- (a.corner 14)
	;
	\fill[radius=2pt] \foreach \i in {1, ..., 16} { (a.corner \i) circle[] };
	\end{tikzpicture}\]
\end{minipage} 
	\captionof{figure}{$\overline{Q_3}$ and $\overline{Q_3\circledast J_{2}}$ graphs}
	\label{graphKJ}
\end{figure}

\begin{lema}\label{gbar}
Let $G$ be a  connected $r$-regular graph of order $n$ and four distinct eigenvalues. Then its complement $\overline{G}$ is also connected and regular with four distinct eigenvalues, or $\overline{G}$  is disconnected, and then it is the union of cospectral strongly regular graphs. 
\end{lema}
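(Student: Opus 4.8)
The plan is to transfer the spectral information to the complement via Lemma~\ref{lema31}$(i)$ and then split according to whether an eigenvalue collision occurs. Write $Spec(G)=\{[r]^{1},[\lambda_2]^{m_2},[\lambda_3]^{m_3},[\lambda_4]^{m_4}\}$ with $r>\lambda_2>\lambda_3>\lambda_4$. Since $G$ is connected its index is simple, so Lemma~\ref{lema31}$(i)$ gives that $\overline{G}$ is $(n-1-r)$-regular with $Spec(\overline{G})=\{[n-1-r]^{1},[-\lambda_2-1]^{m_2},[-\lambda_3-1]^{m_3},[-\lambda_4-1]^{m_4}\}$. As $G$ has more than two distinct eigenvalues, $G\neq K_n$, so $\overline{G}$ has an edge, $n-1-r\geq 1$, and $n-1-r$ is the largest eigenvalue of $\overline{G}$, its multiplicity being the number $c$ of connected components of $\overline{G}$. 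The numbers $-\lambda_2-1<-\lambda_3-1<-\lambda_4-1$ are distinct, so $n-1-r$ coincides with at most one of them, and, being maximal, only with $-\lambda_4-1$; hence $\overline{G}$ has either four distinct eigenvalues (no collision) or exactly three (the collision $n-1-r=-\lambda_4-1$).

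If there is no collision, $n-1-r$ has multiplicity $1$, so $c=1$: $\overline{G}$ is connected, and it is a regular graph with the four distinct eigenvalues listed above, which is the first alternative. If there is a collision, $\overline{G}$ has three distinct eigenvalues $k:=n-1-r$ (of multiplicity $1+m_4$), $\theta_1:=-\lambda_3-1$ and $\theta_2:=-\lambda_2-1$, so $c=1+m_4\geq 2$ and $\overline{G}$ is disconnected; each of its components is connected and $k$-regular with $k\geq 1$, and its eigenvalues lie in $\{k,\theta_1,\theta_2\}$. I would then rule out complete components by showing $-1\notin\{k,\theta_1,\theta_2\}$. Indeed $k\geq 1$; the least eigenvalue $\theta_2$ of a graph with an edge is at most $-1$, and equals $-1$ only for a disjoint union of cliques, which would make every component complete and $\overline{G}$ have only two distinct eigenvalues, so $\theta_2<-1$; and $\theta_1\neq -1$ because, by Lemma~\ref{lemasrg}, the middle eigenvalue of a strongly regular graph is nonnegative, so if $\theta_1=-1$ then no component could have three distinct eigenvalues, again forcing every component to be complete. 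Since $K_{k+1}$ has $-1$ as an eigenvalue, no component is complete, so each component is connected with exactly three distinct eigenvalues and hence, by Lemma~\ref{treesrg}, strongly regular with eigenvalues $k,\theta_1,\theta_2$.

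Finally I would show that these strongly regular components are pairwise cospectral. By Lemma~\ref{lemasrg}, a component that is an $srg(n_i,k,\alpha_i,\beta_i)$ with eigenvalues $k,\theta_1,\theta_2$ must satisfy $\alpha_i-\beta_i=\theta_1+\theta_2$ and $\beta_i-k=\theta_1\theta_2$, so $\alpha_i$ and $\beta_i$ are independent of $i$; since a connected non-complete strongly regular graph has diameter two, nonadjacent vertices have a common neighbor, so $\beta_i\geq 1$, and the standard identity $k(k-1-\alpha_i)=(n_i-1-k)\beta_i$ then forces all $n_i$ to be equal; consequently the multiplicities of $\theta_1$ and $\theta_2$, being the unique solution of $f_1+f_2=n_i-1$ and $\theta_1 f_1+\theta_2 f_2=-k$, are also equal. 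Therefore $\overline{G}$ is a disjoint union of cospectral strongly regular graphs, the second alternative. I expect this last step to be the main obstacle: checking that each component is strongly regular is routine, but obtaining cospectrality requires the observation that prescribing the three eigenvalues $k,\theta_1,\theta_2$ pins down the whole parameter quadruple $(n_i,k,\alpha_i,\beta_i)$, and hence the whole spectrum, of each component.
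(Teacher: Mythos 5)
Your proof is correct and follows essentially the same route as the paper: pass to $\overline{G}$ via Lemma \ref{lema31}$(i)$, split on whether the index of $\overline{G}$ is simple, exclude complete components using the eigenvalue structure of strongly regular graphs from Lemma \ref{lemasrg}, and conclude via Lemma \ref{treesrg}. The only difference is that you spell out why the components are cospectral (the three shared eigenvalues pin down the parameters $(n_i,k,\alpha_i,\beta_i)$ and hence the multiplicities), a point the paper asserts without detail.
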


\begin{proof}\renewcommand{\qedsymbol}{}
Let $Spec\left(G\right)=\left\{[r]^{1}, [\lambda_2]^{m_2}, [\lambda_3]^{m_3}, [\lambda_4]^{m_4}\right\},$ 
where $r>\lambda_2>\lambda_3>\lambda_4$. Then, by Lemma \ref{lema31}, $\overline{G}$ is $(n-1-r)$-regular with spectrum $$\left\{[n-1-r]^1, [-\lambda_2-1]^{m_2} , [-\lambda_3 -1]^{m_3}, [ -\lambda_4-1]^{m_4}\right\}.$$ 

Either the index of $\overline{G}$ is simple and so it is connected  with four distinct eigenvalues, or the index of $\overline{G}$ is not simple and then it is disconnected  with  three distinct eigenvalues. The spectrum of a disconnected graph is the union of the spectra of its connected components. Hence in the case $\overline{G}$ is disconnected, each connected component has two or three distinct eigenvalues.  Lemma \ref{treesrg} implies that every component with three distinct eigenvalues is a strongly regular graph, since  $\overline{G}$ is regular. If a connected component has only two distinct eigenvalues, then it is a complete graph, which has least eigenvalue $-1$. As shown in the proof of Theorem \ref{teoimp3} the smallest eigenvalue of a connected graph with 3 distinct eigenvalues is at most $ -\sqrt{2}$. Hence in the case $\overline{G}$ has a complete graph as a connected component, each component with three distinct eigenvalues has at least two negative eigenvalues, namely $-1$ and $-\lambda_2 - 1 < -1$. However, by Lemma \ref{lemasrg} every  strongly regular graph has only one negative eigenvalue. Therefore, if $\overline{G}$ is disconnect, all its connected components are cospectral strongly regular graphs. \QEDA

\end{proof}

\begin{lema}\cite[Theorem 3.5]{Huang}\label{teo13}
There are no connected $r$-regular graphs with spectrum $\left\lbrace\left[r\right]^1,\left[-1\right]^{1},\left[\delta\right]^{m},\left[\zeta\right]^{n-2-m}\right\rbrace$, where $\delta$ and $\zeta$ are integers and $2\leq m\leq n-4$.
\end{lema}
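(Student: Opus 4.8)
The plan is to use the (easily verified) fact that a connected regular graph with at most four distinct eigenvalues is walk‑regular. Write $A$ for the adjacency matrix of $G$ and suppose $Spec(G)=\{[r]^1,[\delta]^m,[-1]^1,[\zeta]^{\,n-2-m}\}$; the precise position of $-1$ in the spectrum is justified in the next paragraph, and for now I only use that $-1$ is a simple eigenvalue. Since $\mathbf 1$ spans the $r$‑eigenspace of the connected regular graph $G$, one has $(A-\delta I)(A+I)(A-\zeta I)=\frac{(r-\delta)(r+1)(r-\zeta)}{n}J$; as the minimal polynomial of $A$ has degree $4$, every power $A^k$ is a polynomial of degree $\le 3$ in $A$, and because $(A)_{ii}=0$, $(A^2)_{ii}=r$ and, by the displayed identity, $(A^3)_{ii}$ is constant, all diagonal entries of every $A^k$ coincide. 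In particular the rank‑one spectral idempotent $E_{-1}$ of the eigenvalue $-1$ has constant diagonal $1/n$, so a $(-1)$‑eigenvector $v$ normalised by $v^Tv=n$ has all entries in $\{\pm1\}$. Let $V=V_+\cup V_-$ be the partition given by the signs of the entries of $v$. Reading off $Av=-v$ at each vertex shows that $r$ is odd, that every vertex of $V_+$ has exactly $(r-1)/2$ neighbours in $V_+$ and $(r+1)/2$ in $V_-$ (and symmetrically for $V_-$), and, counting edges across the cut, that $|V_+|=|V_-|=n/2=:s$. Thus $\{V_+,V_-\}$ is an equitable partition whose quotient matrix $\begin{pmatrix}(r-1)/2 & (r+1)/2\\ (r+1)/2 & (r-1)/2\end{pmatrix}$ has eigenvalues $r$ and $-1$.

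Next I would fix the ordering. If $\lambda_2(G)=-1\le 0$, then by Lemma \ref{teo1} $G$ is complete multipartite, hence, being regular, isomorphic to $\overline{tK_p}$ for some $t,p$, which has at most three distinct eigenvalues — impossible. Therefore $-1=\lambda_3(G)$, the eigenvalue $\delta=\lambda_2(G)$ is a positive integer (so $\delta\ge 1$), and $\zeta=\lambda_4(G)\le -2$. Now set $P=A[V_+]$ and $R=A[V_-]$, which are $(r-1)/2$‑regular graphs on $s$ vertices, and let $C$ be the $s\times s$ biadjacency matrix of the $(r+1)/2$‑biregular bipartite graph between the parts. On the $(n-2)$‑dimensional subspace $W$ of vectors summing to zero on each cell — which is $A$‑invariant because the partition is equitable — the matrix $A$ acts as $\mathcal A=\begin{pmatrix}P_0 & C_0\\ C_0^T & R_0\end{pmatrix}$, where $P_0,R_0,C_0$ are the compressions of $P,R,C$ to $\mathbf 1^\perp$, and the eigenvalues of $\mathcal A$ are exactly $\delta$ (multiplicity $m$) and $\zeta$ (multiplicity $n-2-m$). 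Expanding $(\mathcal A-\delta I)(\mathcal A-\zeta I)=0$ blockwise yields $C_0C_0^T=-(P_0-\delta I)(P_0-\zeta I)$ and $C_0^TC_0=-(R_0-\delta I)(R_0-\zeta I)$ — so all eigenvalues of $P_0$ and of $R_0$ lie in $[\zeta,\delta]$, and $\ker C_0=\ker C_0^TC_0$ is the sum of the $\delta$‑ and $\zeta$‑eigenspaces of $R_0$ — together with $R_0C_0^T=C_0^T\big((\delta+\zeta)I-P_0\big)$, which says that $C_0^T$ carries the $\mu$‑eigenspace of $P_0$ isomorphically onto the $(\delta+\zeta-\mu)$‑eigenspace of $R_0$ whenever $\zeta<\mu<\delta$.

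These relations reduce the problem to bookkeeping of multiplicities: the ``interior'' eigenvectors of $P_0$ and $R_0$ pair up, each pair contributing one $\delta$ and one $\zeta$ to $\mathcal A$, so the entire excess $m-(n-2-m)$ of $\delta$'s over $\zeta$'s in $Spec(\mathcal A)$ must be accounted for by the $\delta$‑ and $\zeta$‑eigenspaces of $P_0$ and $R_0$ — and these, apart from the Perron value $(r-1)/2$ (which occurs in $P_0$ only when $P$ is disconnected, forcing then $\delta\ge (r-1)/2$), are just eigenspaces of the regular graphs $P$ and $R$. One then has to confront these equalities with the integrality of $\delta,\zeta$, with the Perron–Frobenius constraints on the spectra of $P$ and $R$, and with the non‑negativity of the combinatorial quantities in play (edge and triangle counts inside each cell), and show that no admissible tuple $(n,r,\delta,\zeta,m)$ survives; in the borderline case $\zeta=-2$ one can instead invoke the classification of graphs with least eigenvalue at least $-2$. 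I expect this final elimination of all parameter sets to be the main obstacle. An alternative that bypasses part of the bookkeeping is to complement the bipartite graph between $V_+$ and $V_-$: this produces an $(s-1)$‑regular graph $\widehat G$ with $Spec(\widehat G)=\{[s-1]^1,[r-s]^1,[\delta]^m,[\zeta]^{\,n-2-m}\}$, after which one can run a descent on $\min(m,\,n-2-m)$ or on $n$, using Lemma \ref{gbar} to control the components of the complements that arise along the way.
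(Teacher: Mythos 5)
The paper does not prove this statement at all: it is quoted verbatim from Huang and Huang \cite[Theorem 3.5]{Huang}, so the only ``proof'' in the paper is that citation. Your write-up is therefore an attempt at an independent proof, and as it stands it has a genuine gap: everything up to the block relations $C_0C_0^T=-(P_0-\delta I)(P_0-\zeta I)$, $C_0^TC_0=-(R_0-\delta I)(R_0-\zeta I)$ and the intertwining identity is sound (walk-regularity of connected regular graphs with four eigenvalues, the $\pm1$ eigenvector for the simple eigenvalue $-1$, oddness of $r$, the equitable halving $|V_+|=|V_-|$, and the exclusion of $-1=\lambda_2$ via Lemma \ref{teo1} are all correct), but the actual content of the lemma --- that \emph{no} parameter tuple $(n,r,\delta,\zeta,m)$ survives --- is exactly the step you defer with ``one then has to confront these equalities \dots and show that no admissible tuple survives.'' Nothing in your setup yet produces a contradiction: the multiplicity bookkeeping, the integrality of $\delta,\zeta$, the Perron--Frobenius constraints on $P$ and $R$, and the nonnegativity of the edge/triangle counts are listed as ingredients but never combined into an elimination, and the alternative ``switching/descent'' route via $\widehat G$ and Lemma \ref{gbar} is likewise only gestured at. So what you have is a plausible reduction, not a proof of nonexistence.

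Two smaller points. First, when fixing the ordering you assert that $\delta=\lambda_2(G)$ is a \emph{positive} integer; the case $\delta=0$ is not addressed, though it is excluded by the same appeal to Lemma \ref{teo1} (if $\lambda_2=0$ then $G$ is regular complete multipartite and has at most three distinct eigenvalues), so state that explicitly. Second, the claim that $C_0^T$ maps interior eigenspaces of $P_0$ \emph{isomorphically onto} the corresponding eigenspaces of $R_0$ needs the reverse map (via $C_0$ and the transposed intertwining relation) to get surjectivity; injectivity alone follows from $x^TC_0C_0^Tx=-(\mu-\delta)(\mu-\zeta)\|x\|^2>0$. If you want a complete argument, either carry out the parameter elimination in full or simply cite Huang and Huang as the paper does.
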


In \cite{Huang} Huang and Huang also characterized all connected regular graphs with four distinct eigenvalues, one equals $0$, where exactly two are simple.

\begin{lema}\cite[Theorem 3.7]{Huang}
\label{huanghuang}
A connected regular graph $G$ has four distinct eigenvalues in which exactly two eigenvalues are simple and with $0$ as an eigenvalue if and only if $G=\overline{K^{-}_{\ell,\ell}\circledast J_m}$, with $\ell\geq 3$ and $m\geq 1$. 
\end{lema}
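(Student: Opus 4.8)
The plan is to treat the two implications separately. The ``if'' direction is a direct spectral computation. By Example~\ref{ex34}, $Spec\left(K^{-}_{\ell,\ell}\right)=\left\lbrace\left[\ell-1\right]^1,\left[1\right]^{\ell-1},\left[-1\right]^{\ell-1},\left[-\ell+1\right]^1\right\rbrace$, so applying Lemma~\ref{lema31}$(iii)$ and then Lemma~\ref{lema31}$(i)$ gives
\begin{equation*}
Spec\left(\overline{K^{-}_{\ell,\ell}\circledast J_m}\right)=\left\lbrace\left[\ell m\right]^1,\left[m(\ell-2)\right]^1,\left[0\right]^{2\ell m-\ell-1},\left[-2m\right]^{\ell-1}\right\rbrace .
\end{equation*}
For $\ell\geq 3$ and $m\geq 1$ these are four distinct values, $0$ is one of them, and exactly two of them --- the index $\ell m$ and $m(\ell-2)$ --- are simple; moreover $K^{-}_{\ell,\ell}\circledast J_m$ is regular, hence so is its complement, and since its index $\ell m$ is simple the complement is connected. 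That settles this direction.

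For the converse, let $G$ be connected and $r$-regular with four distinct eigenvalues $r>\theta_2>\theta_3>\theta_4$ of multiplicities $1,m_2,m_3,m_4$, with $0\in\{\theta_2,\theta_3,\theta_4\}$ and exactly one of $m_2,m_3,m_4$ equal to $1$. First I would locate the zero eigenvalue: since the eigenvalues sum to $0$ and $r>0$ we have $\theta_4<0$, so $\theta_4\neq 0$; and if $\theta_2=0$ then $\lambda_2\leq 0$, so by Lemma~\ref{teo1} $G$ is complete multipartite, hence --- being connected and regular --- a balanced complete multipartite graph, which has only three distinct eigenvalues (compare the proof of Theorem~\ref{teoimp2}), a contradiction. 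Thus $\theta_3=0$ and $r>\theta_2>0>\theta_4$. Next I would pass to the complement. By Lemma~\ref{lema31}$(i)$,
\begin{equation*}
Spec\left(\overline{G}\right)=\left\lbrace\left[n-1-r\right]^1,\left[-\theta_2-1\right]^{m_2},\left[-1\right]^{m_3},\left[-\theta_4-1\right]^{m_4}\right\rbrace ,
\end{equation*}
and since $G$ is connected and not complete it contains an induced $P_3$, so $\theta_4\leq-\sqrt 2$ by interlacing (as in the proof of Theorem~\ref{teoimp3}); together with $\theta_2>0$ this makes the four displayed values pairwise distinct. As no strongly regular graph has $-1$ as an eigenvalue --- its least eigenvalue is at most $-\sqrt 2$ and its other non-index eigenvalue is non-negative --- $\overline G$ cannot be a disjoint union of strongly regular graphs, so Lemma~\ref{gbar} forces $\overline G$ to be connected, regular, with four distinct eigenvalues. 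A routine feasibility argument then makes $G$ integral: its characteristic polynomial is monic over $\mathbb Z$, and the only obstruction --- a conjugate pair of quadratic irrationalities, which would require $m_2=m_4$ and $m_3=1$ --- is ruled out because the numbers of closed walks of lengths $3$ and $4$ from a vertex, integers by the walk-regularity of connected regular graphs with four eigenvalues, cannot both come out integral.

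Now $\overline G$ is connected, regular, integral, with four distinct eigenvalues, one of which is $-1\neq n-1-r$. By Lemma~\ref{teo13} a connected regular graph cannot have $-1$ as the only simple eigenvalue besides its index; since $\overline G$ has exactly two simple eigenvalues, $-1$ is therefore not simple, i.e. $m_3\geq 2$, and exactly one of $m_2,m_4$ equals $1$. The decisive remaining step uses that $-1$ occurs in $Spec(\overline G)$ with large multiplicity. Whenever $\overline G$ has a pair of closed twins one strips them off, writing $\overline G=H\circledast J_k$ for a smaller connected regular graph $H$; using Lemma~\ref{lema31} one checks that $\overline H$ again lies in the same class (connected regular with four distinct eigenvalues, exactly two simple, $0$ an eigenvalue), so one inducts on the order, invoking the identities $\overline X\otimes J_a=\overline{X\circledast J_a}$ and $(X\circledast J_a)\circledast J_b=X\circledast J_{ab}$ to reassemble the conclusion. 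This reduces matters to the twin-free case, which one shows must be bipartite; then Lemma~\ref{lema2} identifies it as the incidence graph of a symmetric BIBD, and requiring $-1$ in the spectrum forces the parameters $(\ell,\ell-1,\ell-2)$, i.e. $\overline G=K^{-}_{\ell,\ell}$ with $\ell\geq 3$. Unwinding the induction yields $\overline G=K^{-}_{\ell,\ell}\circledast J_m$, hence $G=\overline{K^{-}_{\ell,\ell}\circledast J_m}$.

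The main obstacle is precisely this last structural step --- converting the spectral data (four eigenvalues, $-1$ of high multiplicity, the prescribed pattern of simple eigenvalues) into the explicit clique-blow-up description, and resolving the twin-free base case, which amounts to classifying connected regular graphs with four eigenvalues one of which is $-1$. That is where Lemmas~\ref{teo13}, \ref{lema2} and \ref{lema31} and the classification of connected regular graphs with three distinct eigenvalues all enter, and where the argument is genuinely delicate; by contrast the earlier steps (locating $0$, passing to the complement, excluding a disconnected complement, and establishing integrality) are routine.
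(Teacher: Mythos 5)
First, note that the paper does not prove this statement at all: it is quoted, with citation, as Theorem 3.7 of Huang and Huang \cite{Huang}, so there is no internal proof to compare yours with. Your ``if'' direction is correct and is essentially the same computation the paper itself carries out inside the proof of Theorem \ref{teoimp4} (apply Lemma \ref{lema31}$(iii)$ and $(i)$ to $K^{-}_{\ell,\ell}$ and read off $\left\lbrace\left[\ell m\right]^1,\left[(\ell-2)m\right]^1,\left[0\right]^{2\ell m-\ell-1},\left[-2m\right]^{\ell-1}\right\rbrace$), and your preliminary reductions in the converse (locating $0$ as the middle eigenvalue, passing to $\overline G$, using Lemma \ref{gbar} plus the fact that a connected strongly regular graph cannot have $-1$ as an eigenvalue, and invoking Lemma \ref{teo13} to show $-1$ is not simple in $\overline G$) are sound.

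However, the converse is not actually proved. Two steps are asserted rather than established. (a) Integrality: you claim the conjugate-irrational case ($m_2=m_4$, $m_3=1$) ``cannot both come out integral'' from closed-walk counts, but no computation is given, and it is not obvious; this matters because Lemma \ref{teo13} requires integer eigenvalues. (b) The decisive structural step: you propose to strip closed twins, write $\overline G=H\circledast J_k$, and reduce to a twin-free graph ``which one shows must be bipartite,'' then finish with Lemma \ref{lema2}. Writing $\overline G=H\circledast J_k$ already needs all twin classes to have the same size, which does not follow from regularity alone and is not argued; that the quotient stays in the same spectral class under your induction is not checked; and the bipartiteness of the twin-free core is precisely the hard classification content of Huang--Huang's theorem, which you explicitly defer (``the main obstacle''). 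As it stands, the ``only if'' direction is a plan whose key steps coincide with the cited theorem itself, so the proposal does not constitute an independent proof of the lemma; for the purposes of this paper the correct move is simply to cite \cite{Huang}.
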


The result below characterizes all connected regular graphs in $\mathcal{G}_n$ with four distinct eigenvalues, at least two of them simple.  In particular it shows that the graphs in $\mathcal{G}_n$ in the family given in Example  \ref{ex1secao3} are the only connected graphs in $\mathcal{G}_n$ with four distinct eigenvalues where the index is not the only simple eigenvalue.

\begin{teo}\label{teoimp4}
Let $G$ be a connected graph of order $n$ with spectrum $\left\lbrace\left[\lambda\right]^1,\left[\mu\right]^{n-k-t-1},\left[0\right]^{t},\right.$ $\left.\left[-\mu\right]^{k}\right\rbrace$, where  $t,k \geq 1$, $t +k \leq n-2$, and $\lambda>\mu>0$. Then $G$ is integral. Furthermore, if $G$ is regular then either $G=\overline{Q_3\circledast J_{\frac{\mu}{2}}}$ or $\lambda$ is the only simple eigenvalue of $G$.
\end{teo}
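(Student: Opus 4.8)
The plan is to establish the two assertions separately: integrality for an arbitrary $G$ with the stated spectrum, and then the regular-case dichotomy.

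\emph{Integrality.} I would extract the three lowest moments of the spectrum. Since $A=A(G)$ has zero diagonal, $\operatorname{tr}(A)=0$, so $\lambda+(n-k-t-1)\mu-k\mu=0$, giving $\lambda=c\mu$ with $c:=2k+t+1-n\in\mathbb{Z}$; because $\lambda>\mu>0$, in fact $c\ge 2$. Next $\operatorname{tr}(A^{2})=2|E(G)|\in\mathbb{Z}$ equals $(c^{2}+n-t-1)\mu^{2}$, and since $c^{2}+n-t-1>0$ this forces $\mu^{2}\in\mathbb{Q}$; as $\mu^{2}$ is an eigenvalue of the integer matrix $A^{2}$ it is an algebraic integer, hence $\mu^{2}\in\mathbb{Z}$, so $\mu$ is an integer or irrational. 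Finally $\operatorname{tr}(A^{3})\in\mathbb{Z}$ and, using $n-2k-t-1=-c$, equals $\lambda^{3}-c\mu^{3}=\bigl(c(c^{2}-1)\mu^{2}\bigr)\mu$; the factor $c(c^{2}-1)\mu^{2}$ is a nonzero integer, so $\mu\in\mathbb{Q}$ and therefore $\mu\in\mathbb{Z}$ and $\lambda=c\mu\in\mathbb{Z}$. Thus $G$ is integral. I would also record that $G$ is not complete (it has four distinct eigenvalues), so it contains an induced $P_{3}$, whence Cauchy interlacing gives $-\mu\le-\sqrt{2}$ and hence $\mu\ge 2$, exactly as in the proof of Theorem~\ref{teoimp3}; this bound is needed below.

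\emph{Reducing to exactly two simple eigenvalues.} Now assume $G$ is regular and that $\lambda$ is not its only simple eigenvalue. Since $\lambda$ is simple by Perron--Frobenius, at least one of $\mu,0,-\mu$ is simple, and I claim in fact exactly two eigenvalues are simple; I would prove this by discarding the configurations with three or four simple eigenvalues. If $\mu$ and $-\mu$ are both simple (in particular if all four eigenvalues are simple, which forces $n=4$) then $n-k-t-1=k=1$ and $\operatorname{tr}(A)=\lambda>0$, contradicting $\operatorname{tr}(A)=0$. If $0$ and $-\mu$ are both simple then $t=k=1$, so $\mu$ has multiplicity $n-3\ge 2$ and $\operatorname{tr}(A)=\lambda+(n-4)\mu>0$, again impossible. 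The only remaining case is $\mu,0$ simple and $-\mu$ not: then $t=1$, $k=n-3\ge 2$, $\operatorname{tr}(A)=0$ gives $\lambda=(n-4)\mu$, and then $\operatorname{tr}(A^{2})=n\lambda$ (regularity) yields $\mu=\frac{n(n-4)}{n^{2}-7n+14}$, which is incompatible with $\mu\ge 2$ because that would require $0\ge(n-5)^{2}+3$. Hence $G$ has exactly two simple eigenvalues.

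\emph{Conclusion.} Since $0$ is an eigenvalue of $G$ (multiplicity $t\ge 1$) and $G$ is connected regular with four distinct eigenvalues of which exactly two are simple, Lemma~\ref{huanghuang} gives $G=\overline{K^{-}_{\ell,\ell}\circledast J_{m}}$ for some $\ell\ge 3$, $m\ge 1$. Using $Spec(K^{-}_{\ell,\ell})$ from Example~\ref{ex34} together with parts (iii) and (i) of Lemma~\ref{lema31}, a direct computation gives
$$Spec\left(\overline{K^{-}_{\ell,\ell}\circledast J_{m}}\right)=\left\{[\ell m]^{1},\ [m(\ell-2)]^{1},\ [0]^{2\ell m-\ell-1},\ [-2m]^{\ell-1}\right\}.$$
The index is $\ell m$, so the two nonzero non-index eigenvalues are $m(\ell-2)>0$ and $-2m$; since $G$ has spectrum of the stated shape these must equal $\mu$ and $-\mu$, which forces $m(\ell-2)=2m$, i.e.\ $\ell=4$. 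Therefore $G=\overline{K^{-}_{4,4}\circledast J_{m}}=\overline{Q_{3}\circledast J_{m}}$ (recall $Q_{3}=K^{-}_{4,4}$), and matching with $Spec(\overline{Q_{3}\circledast J_{m}})$ from Example~\ref{ex1secao3} gives $\mu=2m$, so $m=\mu/2$ and $G=\overline{Q_{3}\circledast J_{\mu/2}}$, as required. I expect the main obstacle to be the case analysis isolating ``exactly two simple eigenvalues'': one must check that the enumeration of which of $\mu,0,-\mu$ can be simple is exhaustive, and the delicate last subcase genuinely relies on the integrality-derived bound $\mu\ge 2$; the spectral bookkeeping for $\overline{K^{-}_{\ell,\ell}\circledast J_{m}}$ is routine but must correctly track which eigenvalue is the index.
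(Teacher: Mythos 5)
Your proof is correct, but it takes a route that is genuinely different from the paper's in both halves. For integrality, the paper writes down the minimal polynomial $x(x-\lambda)(x-\mu)(x+\mu)$ and invokes the fact that it has integer coefficients to get $\lambda\in\mathbb{Z}$, then uses the trace relation and the rational root theorem for $\mu$; your moment argument with $\operatorname{tr}(A)$, $\operatorname{tr}(A^{2})$, $\operatorname{tr}(A^{3})$ reaches the same conclusion in a self-contained, elementary way, and your extra observation $\mu\ge 2$ (induced $P_3$ plus interlacing) is the same device the paper uses inside the proof of Theorem \ref{teoimp3}. For the regular case, the paper first proves that $0$ is never simple --- via the complement's spectrum, Lemma \ref{gbar} and the nonexistence result Lemma \ref{teo13} --- and that $-\mu$ is never simple (a counting argument), so that the second simple eigenvalue must be $\mu$, and only then applies Lemma \ref{huanghuang}. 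You instead rule out every configuration with three or more simple eigenvalues by trace identities (the last subcase using regularity through $\operatorname{tr}(A^{2})=n\lambda$ together with $\mu\ge 2$), conclude ``exactly two simple'' without identifying which eigenvalue accompanies $\lambda$, and let Lemma \ref{huanghuang} plus matching of the distinct eigenvalue values force $\ell=4$ and $m=\mu/2$. This is a legitimate shortcut: it bypasses Lemmas \ref{gbar} and \ref{teo13} entirely, at the cost of not establishing the (unneeded) stronger facts that $0$ and $-\mu$ can never be simple; the matching step is sound because it compares eigenvalue values rather than multiplicities, so it does not depend on which eigenvalue happened to be the second simple one. Minor quibbles only: in your second subcase the claim that $\mu$ has multiplicity at least $2$ is justified only if that case is taken disjoint from the first, but your trace inequality $\lambda+(n-4)\mu>0$ needs nothing beyond $n\ge 4$; and in the third subcase integrality of $\mu$ alone would already yield the contradiction, so the appeal to $\mu\ge 2$ is convenient but not essential.
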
 

\begin{proof}\renewcommand{\qedsymbol}{}
The adjacency matrix of a graph is diagonalizable and so its minimal polynomial is a product of distinct linear factors (see for instance \cite{Horn}). Thus the minimum polynomial of $G$ is
\begin{center}
$m_G\left(x\right)=x\left(x-\lambda\right)\left(x-\mu\right)\left(x+\mu\right)=x^4-\lambda x^3-\mu^2x^2+\lambda\mu^2 x.$
\end{center}

Beside that, $m_G\left(x\right)$ has integral coefficients \cite[Lemma 2.5]{VanDam1995} which implies that $\lambda\in\mathbb{Z}$. Since  $\lambda+\left(n-k-t-1\right)\mu-k\mu=0$, we have  $\lambda=\left(2k+t-n+1\right)\mu$. Hence  $\mu\in\mathbb{Q}$ and the rational root theorem implies that $\mu\in\mathbb{Z}$. Then $G$ is integral.
	
Suppose that $G$ is regular. We first prove that $0$ is not a simple eigenvalue of $G$. In fact if $t=1$  Lemma \ref{lema31} implies that $\overline{G}$ is regular with spectrum 
$$Spec\left(\overline{G}\right) =\left\lbrace\left[n-1-\lambda\right]^1,\left[\mu-1\right]^{k},\left[-1\right]^{1},\left[-\mu-1\right]^{n-k-2}\right\rbrace.$$

Clearly $-1$ cannot be equal to any other eigenvalue of $\overline{G}$, thus it is a simple eigenvalue. On the other hand, by Lemma  \ref{gbar} $\overline{G}$ is connected with four distinct eigenvalues or a disjoint union of cospectral strongly regular graphs.  The former case is excluded by  Lemma \ref{teo13}. The latter case also cannot occur since  $-1$ is  simple. Therefore $0$ is not a simple eigenvalue of $G$.
	
It is easy to see that $-\mu$ is also not a simple eigenvalue of $G$. If $k=1$ we have $\lambda=\left(t-n+3\right)\mu$ and so $t-n+3\geq 2$. Hence $n\leq t+1$, which  contradicts the fact that $n \geq t+3$. 
  
Now suppose that $\mu$ is a simple eigenvalue of $G$. Then $G$ has exactly two simple eigenvalues and Lemma \ref{huanghuang} implies that $\overline{G} =K^{-}_{\ell, \ell}\circledast J_m$, with $\ell\geq 3$, $m\geq 1$ and $n=2\ell m$. By lemma \ref{lema31} we have $$Spec\left(\overline{G}\right)=\left\lbrace\left[\ell m -1 \right]^1,\left[(2-\ell)m-1\right]^{1},\left[-1\right]^{2\ell m-\ell-1},\left[2m-1\right]^{\ell-1 }\right\rbrace,$$ and hence $$Spec\left(G\right)=\left\lbrace\left[\ell m \right]^1, \left[(\ell -2)m \right]^{1},\left[0\right]^{2\ell m-\ell-1},\left[-2m\right]^{\ell-1}\right\rbrace.$$

Therefore $\lambda=\ell m$, $\mu=\left(\ell -2\right)m$ and $-\mu =-2m$, which implies that $\ell=4$. Thus \mbox{$G=\overline{Q_3\circledast J_{\frac{\mu}{2}}}$.} \QEDA 
	
\end{proof}

For a complete characterization of connected graphs in $\mathcal{G}_n$ with four distinct  eigenvalues it remains to consider irregular graphs and regular graphs where the index is the only simple eigenvalue. For both cases we present next infinite families of graphs. Our constructions are based on the result below, which follows from Lema \ref{lema31}.

\begin{prop}\label{prop3}
Let $G$ be a connected graph of order $n$. If $G \in \mathcal{G}_n,$ then $G \otimes J_m \in \mathcal{G}_{nm}$ for all $m \geq 1$. 
%Moreover, $G$ is regular if and only if $G \otimes J_m $ is regular.
%if $Spec\left(G\right) = \left\lbrace\left[\lambda\right]^1,\left[\mu\right]^{n-k-t-1},\left[0\right]^{t},\right.$ $\left.\left[-\mu\right]^{k}\right\rbrace$, where  $t \geq 0$, $k \geq 1$,  $t+k \leq n-2$, and $\lambda>\mu>0$,  then 
%$$Spec\left(G\otimes J_m\right) = \left\lbrace\left[m\lambda\right]^1,\left[m\mu\right]^{n-k-t-1},\left[0\right]^{t+ (m-1)n},\right. \left.\left[-m\mu\right]^{k}\right\rbrace$$
\end{prop}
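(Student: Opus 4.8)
The plan is to compute $Spec(G\otimes J_m)$ from Lemma~\ref{lema31}$(ii)$ and observe that it has exactly the shape required for membership in $\mathcal{G}_{nm}$. First I would record what $G\in\mathcal{G}_n$ means at the level of the spectrum: since $G$ is connected, its index $\lambda_1$ is simple by the Perron--Frobenius theorem, and by the definition of $\mathcal{G}_n$ the nonzero eigenvalues of $G$ other than $\lambda_1$ all share a common absolute value $\mu\geq 0$. Hence there are nonnegative integers $a,b,c$ with
\[
Spec(G)=\left\{[\lambda_1]^1,[\mu]^a,[-\mu]^b,[0]^c\right\},\qquad 1+a+b+c=n,
\]
where $\lambda_1>0$ because $G$ has an edge, and $\lambda_1>\mu$ again by Perron--Frobenius.

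Next I would apply the spectral part of Lemma~\ref{lema31}$(ii)$. That formula follows from the Kronecker product eigenvalue identity together with $Spec(J_m)=\{[m]^1,[0]^{m-1}\}$, so it is valid for an arbitrary graph $G$ and does not rely on the regularity appearing in the biconditional of that lemma. It yields
\[
Spec(G\otimes J_m)=\left\{[m\lambda_1]^1,[m\mu]^a,[-m\mu]^b,[0]^{\,c+n(m-1)}\right\}.
\]
Then I would check the defining conditions of $\mathcal{G}_{nm}$: the graph $G\otimes J_m$ has $nm$ vertices; it is nonempty because $A(G)\neq 0$ forces $A(G)\otimes J_m\neq 0$; and since $m\geq 1$ we have $m\lambda_1>m\mu\geq 0$ and $m\lambda_1>-m\mu$, while $m\lambda_1$ is simple (no other eigenvalue of $G$ equals $\lambda_1$, and $m\lambda_1\neq 0$), so $m\lambda_1$ is the index of $G\otimes J_m$. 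Every remaining nonzero eigenvalue equals $m\mu$ or $-m\mu$, hence has absolute value exactly $m\mu$. That is precisely the property defining $\mathcal{G}_{nm}$, so $G\otimes J_m\in\mathcal{G}_{nm}$ for all $m\geq 1$.

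There is no genuinely hard step here; the proposition is a bookkeeping consequence of Lemma~\ref{lema31}. The two points that need a little care are: invoking only the spectrum formula of Lemma~\ref{lema31}$(ii)$, which holds with no regularity assumption; and confirming that $m\lambda_1$ is still a simple, strictly largest eigenvalue of $G\otimes J_m$ so that it legitimately plays the role of the index in the definition of $\mathcal{G}_{nm}$ --- both follow at once from $\lambda_1>\mu\geq 0$ and from the simplicity of $\lambda_1$ in $G$. One could also remark, although it is not needed, that $G\otimes J_m$ is connected whenever $G$ is, since any two copies of a vertex are joined through the copies of a common neighbour.
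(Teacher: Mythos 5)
Your proof is correct and takes exactly the route the paper intends: the paper gives no separate argument for Proposition~\ref{prop3} beyond noting that it follows from Lemma~\ref{lema31}, and your observation that the spectrum formula in Lemma~\ref{lema31}$(ii)$ needs no regularity hypothesis is the right reading of that lemma. One small inaccuracy: Perron--Frobenius gives $\lambda_1\geq\mu$ but not $\lambda_1>\mu$, and equality does occur for connected bipartite members of $\mathcal{G}_n$ such as $K_{p,q}$, where $-\lambda_1$ is an eigenvalue; this is harmless, since when $\mu=\lambda_1$ the simplicity of the index forces the multiplicity of $+\mu$ outside the index to be zero, so $m\lambda_1$ is still the simple index of $G\otimes J_m$ and every other nonzero eigenvalue still has absolute value $m\mu$.
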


\begin{exemplo}\label{fam1irr4}
It follows from Example \ref{ex1} that for every prime power $\alpha$ a cone $C_{\alpha}$ over \mbox{$G=srg\left(\alpha^3 + 2\alpha^2,\alpha^2+\alpha,\alpha,\alpha\right)$} is an irregular graph in $\mathcal{G}_n$ with spectrum given by (\ref{coneeq}).
Therefore,  $C_{\alpha} \otimes J_m$ is an irregular graph in $\mathcal{G}_{nm}$ with four distinct eigenvalues such that
$$Spec\left(C_{\alpha}\otimes J_m \right)=\left\lbrace\left[m (\alpha^2+2\alpha)\right]^1,\left[m\alpha\right]^{\frac{\alpha^3+2\alpha^2-\alpha-2}{2}},\left[0\right]^{n(m-1)}, \left[-m\alpha\right]^ {\frac{\alpha^3+2\alpha^2+\alpha+2}{2}}\right\rbrace,$$
for all $m \geq 2$.
The graphs of smallest order in this family (34 vertices), obtained with $\alpha=2$, are $C_2^1 \otimes J_2$ and $C_2^2 \otimes J_2$, where $C_2^1$ is the cone over the Shrikhande graph and $C_2^2$ is  the cone over the lattice graph $L_2\left(4\right)$. Their spectrum is $\left\lbrace \left[16\right]^1,\left[4\right]^6, \left[0\right]^{17}, \left[-4\right]^{10}\right\rbrace$.
\end{exemplo}

\begin{exemplo}\label{fam2irr4}
In Example \ref{ex2novo} we presented an infinite family of irregular graphs in $\mathcal{G}_n$ with spectrum given by 
$\left\lbrace\left[q^3+q^2+q\right]^1,\left[q\right]^{q^3-1},\left[-q\right]^{q^3+q^2+q}\right\rbrace$, for every prime power $q$. 
Taking a graph $G$ of order $n$ in this family and an integer $m\geq 2$, the graph $G \otimes J_m$ is a irregular graph in $\mathcal{G}_{nm}$ with four distinct eigenvalues such that $$Spec\left(G\otimes J_m \right)=\left\lbrace\left[m(q^3+q^2+q)\right]^1,\left[mq\right]^{q^3-1}, \left[0\right]^{n(m-1)}, \left[-mq\right]^{q^3+q^2+q}\right\rbrace.$$The graph of smallest order in this infinite family is  $G \otimes J_2$, where $G$ is the graph on the points and planes of $AG(3, 2)$ (Figure \ref{grapag}).  $G \otimes J_2$ is an irregular graph on $44$ vertices and spectrum  $\left\lbrace \left[28\right]^1,\left[4\right]^7, \left[0\right]^{22}, \left[-4\right]^{14}\right\rbrace$.
\end{exemplo}

\begin{exemplo}
\label{reg4srg}
From each design graph in $\mathcal{G}_n$  we can construct an infinite family of connected regular graphs with four distinct eigenvalues. It follows from Theorem \ref{teoimp3} and Lemma \ref{lemasrg} that a regular graph $G$ in $\mathcal{G}_n$ with three distinct nonzero eigenvalues 
is a design graph with parameters $\left(n,\lambda,\lambda-\mu^2,\lambda-\mu^2\right)$ and spectrum  $\left\lbrace\left[\lambda\right]^1,\left[\mu\right]^{\frac{1}{2}\left(n - 1 -\frac{\lambda}{\mu} \right)},\left[-\mu\right]^{\frac{1}{2}\left(n - 1 +\frac{\lambda}{\mu} \right)}\right\rbrace$. Thus $G \otimes J_m$ is a regular graph in $\mathcal{G}_{nm}$ with four distinct eigenvalues such that
$$Spec\left(G\otimes J_m \right)=\left\lbrace\left[m\lambda\right]^1,\left[m\mu\right]^{\frac{1}{2}\left(n - 1 -\frac{\lambda}{\mu} \right)}, \left[0\right]^{n(m-1)}, \left[-m\mu\right]^{\frac{1}{2}\left(n - 1 +\frac{\lambda}{\mu} \right)}\right\rbrace$$
for all $m \geq 2$. Note that the index is the only simple eigenvalue of $G\otimes J_m$. The graph of smallest order in this family is obtained with $m=2$ and  $G$  a design graph with parameters $\left(15, 8, 4, 4\right)$. Such design is unique and isomorphic to the line graph of $K_6$ \cite{Godsil1}. Note that $L(K_6) \otimes J_2$ is a regular graph on $30$ vertices with spectrum $\left\lbrace \left[16\right]^1,\left[4\right]^5, \left[0\right]^{15}, \left[-4\right]^{9}\right\rbrace$. 
\end{exemplo}

\begin{exemplo} \label{reg4van}
Van Dam and Spence \cite{VanDam4} listed all \textit{feasible spectra} for connected regular graphs with four distinct eigenvalues and at most $30$ vertices. 
Inspecting their list and also the spectrum of the complement of the listed graphs we obtained all possible spectra for connected regular graphs in $\mathcal{G}_n$ with four distinct eigenvalues where the index is the only simple eigenvalue, for $n \leq 30$. 
They are presented in Table \ref{table2}, where $\#$ denotes the number of graphs with that spectrum given in \cite{VanDam4}. In each case, some graphs with that spectrum  are also listed. 

{\small
\begin{table}[H]
\def\arraystretch{1.3} % vertical stretch factor
  \begin{center}
    \begin{tabular}{|c|c|l|l|}% 
     \hline $n$ &  $\#$ & \textit{Spectrum} & \textit{Graphs \footnotemark} \\
      \hline
      $12$ & $2$ & $\left\lbrace\left[4\right]^1,\left[2\right]^3,\left[0\right]^3,\left[-2\right]^5\right\rbrace$ &   $L\left(Q_3\right)$ , \, $BCS_9$  \\
       \hline
       $12$ & $1$ & $\left\lbrace\left[6\right]^1,\left[2\right]^3,\left[0\right]^2,\left[-2\right]^6\right\rbrace$ & $L\left(CP\left(3\right)\right)$ \\
       \hline
        $18$ & $1$ & $\left\lbrace\left[12\right]^1,\left[3\right]^2,\left[0\right]^9,\left[-3\right]^6\right\rbrace$ & $\overline{K_{3,3} \square K_3}$ \\
       \hline
        $18$ & $2$ & $\left\lbrace\left[9\right]^1,\left[3\right]^3,\left[0\right]^8,\left[-3\right]^6\right\rbrace$ &  N/A
        \\
       \hline
       $24$ & $5$ & $\left\lbrace\left[8\right]^1,\left[4\right]^3,\left[0\right]^{15},\left[-4\right]^5\right\rbrace$ & 
       $L\left(Q_3\right)\otimes J_2$, \, $BCS_9\otimes J_2$        \\
       \hline
       $24$ & $28$ & $\left\lbrace\left[12\right]^1,\left[4\right]^3,\left[0\right]^{14},\left[-4\right]^6\right\rbrace$ &   
       $LCP\left(3\right)\otimes J_2$ 
       \\
       \hline
       $27$ & $4$ & $\left\lbrace\left[6\right]^1,\left[3\right]^6,\left[0\right]^{12},\left[-3\right]^8\right\rbrace$ &  $H(3,3)$ , \, $3$-cover $\left(C_3\otimes J_3\right)$ %\cite{VanDam1995}
       \\
       \hline
       $27$ & $13$ & $\left\lbrace\left[18\right]^1,\left[3\right]^6,\left[0\right]^{8},\left[-3\right]^{12}\right\rbrace$ &  $\overline{H(3,3)_3}$% Complement of graphs in row $99$ of table $A1$ in \cite{VanDam4} \
       \\
       \hline
       $27$ & $\geq 1$ & $\left\lbrace\left[12\right]^1,\left[3\right]^8,\left[0\right]^{6},\left[-3\right]^{12}\right\rbrace$ & $H(3,3)_2$ % complement of the graph in row $105$ of Table $A1$ in \cite{VanDam4} \
       \\
       \hline
       $30$ & $\geq 68876$ & $\left\lbrace\left[12\right]^1,\left[3\right]^{10},\left[0\right]^{5},\left[-3\right]^{14}\right\rbrace$ & $L_3\left( 6\right)\setminus6$-coclique \
       \\
       \hline
       $30$ & $\geq 1487$ & $\left\lbrace\left[16\right]^1,\left[4\right]^{5},\left[0\right]^{15},\left[-4\right]^{9}\right\rbrace$ & $ srg\left(15, 8, 4, 4\right) \otimes J_2 $
       
       \\
       \hline
       $30$ & $\geq 24931$ & $\left\lbrace\left[15\right]^1,\left[3\right]^{10},\left[0\right]^{4},\left[-3\right]^{15}\right\rbrace$ & $\overline{srg\left(35,16,6,\right)\setminus5\text{-clique}}$% complement of the graphs in row $170$ of Table $A1$ in \cite{VanDam4}
       \\
       \hline
      \end{tabular}
      \end{center}
       \caption{Spectra of connected regular graphs with four distinct \mbox{eigenvalues} in $\mathcal{G}_n$, $n\leq 30$, where the index is the only simple}
    \label{table2}
      \end{table}
  } 

\footnotetext{$G_1\square G_2$ is the \textit{Cartesian product} of $G_1$ and $G_2$; $BCS_9$ is the graph No. 9 in \cite[Table 9.1]{Bussemaker}; $CP(3)$ is the \textit{cocktail party graph of order $3$}; $H\left(3,3\right) \cong K_3 \square K_3 \square K_3$ is a Hamming graph; $G_i$ denotes the distance-$i$ graph of $G$.}
\end{exemplo}

From any graph in Table \ref{table2} we can construct an infinite family of connected regular graphs in $\mathcal{G}_{nm}$  with four distinct eigenvalues where the index is the only simple eigenvalue. For instance, 
 $L\left(Q_3\right)\otimes J_m$ and $BCS_9\otimes J_m$ are cospectral with spectrum $$\left\lbrace\left[4m\right]^1,\left[2m\right]^3,\left[0\right]^{12m-9},\left[-2m\right]^5\right\rbrace,$$ for all $m \geq 1$, and  $L\left(CP\left(3\right)\right)\otimes J_m$ is a regular graph in $\mathcal{G}_{nm}$ with spectrum 
 $$\left\lbrace\left[6m\right]^1,\left[2m\right]^3,\left[0\right]^{12m-10},\left[-2m\right]^6\right\rbrace.$$ Note that the graphs in these families with at most $30$ vertices appear in Table \ref{table2}.

%%%%%%%%%%%%%%%%%%%%%%%%%%%%%%%%%%%%%%%%%%%%%%%%%%
%%%%%%%%%%%%%%%%%%%%%%%%%%%%%%%%%%%%%%%%%%%%%%%%%%%
\section{Disconnected graphs}\label{sec4}

In the previous sections we considered connected graphs belonging to the family  $\mathcal{G}_n$, the class of nonempty graphs of order $n$ that satisfy the properties required in Problem \ref{prob}. Recall that all graphs in the family $\mathcal{H}_n$ of nonempty graphs of order $n$ that satisfy the properties required in Problem  \ref{prob2} are connected, and $\mathcal{G}_n \supset \mathcal{H}_n$. The following result characterizes the disconnected graphs in $\mathcal{G}_n$.

\begin{prop}\label{disc}
Let $G$ be disconnected graph of order $n$ with index $\lambda$. Then $G$ is in $\mathcal{G}_n$ if and only if one of the following cases holds.
\begin{itemize}
\item[(i)] The spectrum of $G$ is  $\left\lbrace\left[\lambda\right]^{\frac{n-t}{2}}, \left[0\right]^{t},\left[-\lambda\right]^{\frac{n-t}{2}}\right\rbrace$, with $t \geq 0$ and $ \lambda  \geq 1$. The connected components of $G$ are isolated vertices or complete bipartite graphs $K_{p,q}$ such that $pq = \lambda^2$.  In particular, $\lambda = 1$ if and only if every connected component of $G$ that is not an isolated vertex is the complete graph $K_2$.
\item[(ii)] $G$ is integral with spectrum $\left\lbrace\left[\lambda\right]^1,\left[\mu\right]^{n-k-t-1},\left[0\right]^{t},\left[-\mu\right]^{k}\right\rbrace$, with $ t \geq 0$, $k \geq 2$ and  $\lambda > \mu  \geq 1 $. Exactly one connected component $G_1$ of $G$ contains the index as an eigenvalue. In addition $G_1 \in \mathcal{G}_r$, for some $r<n$, and every other connected component of $G$ that is not an isolated vertex is a complete bipartite graph $K_{p,q}$ such that $pq=\mu^{2}$. In particular, $\mu =1$ if and only if $G_1$ is the complete graph $K_r$ and every other connected component of $G$ that is not an isolated vertex is the complete graph $K_2$. 
\end{itemize}
\end{prop}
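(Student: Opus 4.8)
The plan is to use the fact that the spectrum of a disconnected graph is the multiset union of the spectra of its connected components, and then to feed each component into the structural results already proved for connected graphs in $\mathcal{G}_r$. Write $G=G_1\cup\cdots\cup G_s$ with $s\ge 2$ connected components and let $\lambda$ be the index of $G$; since $G$ is nonempty, $\lambda>0$, and $\lambda$ is the largest of the indices of the components, attained (as a simple eigenvalue, by Perron--Frobenius) on at least one of them. Because $G\in\mathcal{G}_n$, after deleting one copy of $\lambda$ all remaining nonzero eigenvalues of $G$ have a single common absolute value, and I would split the argument according to whether $G$ has a nonzero eigenvalue of absolute value strictly smaller than $\lambda$.

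Suppose first that it does not, so every nonzero eigenvalue of $G$ equals $\pm\lambda$. Then every nonempty component $H$ is a connected graph all of whose eigenvalues lie in $\{\lambda,0,-\lambda\}$; its index is $\lambda$ (simple), so it has at most three distinct eigenvalues, and its most negative eigenvalue is $-\lambda$, the negative of its index. If $H$ has two distinct eigenvalues it is a complete graph, which forces $\lambda=1$ and $H=K_2=K_{1,1}$; if it has three distinct eigenvalues, its spectrum has the shape required in Theorem \ref{teoimp2} with negative eigenvalue $-\lambda$, and part (i) of that theorem forces $H=K_{p,q}$ with $pq=\lambda^2$. Hence every nonempty component is $K_{p,q}$ with $pq=\lambda^2$, and $\lambda=\sqrt{pq}\ge 1$; taking the union of the component spectra yields the spectrum displayed in (i), with $\tfrac{n-t}{2}$ equal to the number of nonempty components. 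The ``in particular'' statement is immediate, since $pq=1$ forces $K_{p,q}=K_2$ and conversely.

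Now suppose $G$ has a nonzero eigenvalue of absolute value $\mu$ with $0<\mu<\lambda$. Then, since every non-index nonzero eigenvalue must have absolute value $\mu$, the value $-\lambda$ is not an eigenvalue of $G$; hence $\lambda$ is simple and the spectrum of $G$ has the form $\left\{[\lambda]^1,[\mu]^a,[0]^t,[-\mu]^k\right\}$. Exactly one component, call it $G_1$, carries the eigenvalue $\lambda$, and it is the unique component whose index is $\lambda$; its nonzero eigenvalues other than the index are non-index nonzero eigenvalues of $G$, so $G_1\in\mathcal{G}_r$ with $r<n$. Every other nonempty component $G_j$ has all its nonzero eigenvalues equal to $\pm\mu$ and index $\mu$, so by the structural step of the previous paragraph, now applied with $\mu$ in place of $\lambda$ (using Theorem \ref{teoimp2}(i)), it is $K_{p,q}$ with $pq=\mu^2$. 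If the spectrum of $G_1$ is $\left\{[\lambda]^1,[\mu]^{a_1},[0]^{b_1},[-\mu]^{c_1}\right\}$, then vanishing of the trace gives $\lambda=(c_1-a_1)\mu$, and $\lambda>\mu$ forces $c_1\ge 2$, so $k\ge c_1\ge 2$. Finally, according to whether $G_1$ has two, three, or four distinct eigenvalues, it is integral either trivially (it is then a complete graph) or by Theorem \ref{teoimp2}, Theorem \ref{teoimp3}, or Theorem \ref{teoimp4}; hence $\lambda,\mu\in\mathbb{Z}$ and $G$ is integral, giving case (ii). The ``in particular'' claim follows because $\mu=1$ makes each such $K_{p,q}$ equal to $K_2$ and forces the least eigenvalue of $G_1$ to be $-1$, so that the connected graph $G_1$ is a complete graph $K_r$; conversely $K_r$ has all non-index eigenvalues equal to $-1$.

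The converse (the ``if'' direction) is immediate in both cases: reading the spectrum of $G$ off the prescribed component structure, one sees that after removing the single occurrence of the index $\lambda$ all remaining nonzero eigenvalues share one absolute value, so $G\in\mathcal{G}_n$. I expect the only delicate point to be the repeated structural step --- recognizing a connected graph whose eigenvalues lie in $\{\nu,0,-\nu\}$ as a complete bipartite graph $K_{p,q}$ with $pq=\nu^2$ --- together with making sure the degenerate cases $K_1$ (an isolated vertex) and $K_2=K_{1,1}$ are absorbed into the statement correctly; everything else is bookkeeping on multiplicities and the trace.
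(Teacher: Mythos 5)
Your proof is correct and follows essentially the same route as the paper's: split on whether the non-index eigenvalues all have absolute value $\lambda$ or some smaller $\mu$, identify the nonempty components other than $G_1$ as complete bipartite graphs via Theorem \ref{teoimp2}, and get integrality of $G_1$ (hence of $G$) from Theorems \ref{teoimp2}--\ref{teoimp4}. If anything, you are a bit more explicit than the paper, e.g.\ in deriving $k\geq 2$ from the trace of $G_1$ and in treating the two-eigenvalue (complete graph) component cases separately.
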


\begin{proof} \renewcommand{\qedsymbol}{}
We divide the proof in two cases. First suppose that all nonzero eigenvalues of $G$ have the same absolute value. Then $G \in \mathcal{G}_n$ if and only if $Spec\left(G\right)=\left\lbrace\left[\lambda\right]^{\frac{n-t}{2}},\left[0\right]^{t},\left[-\lambda\right]^{\frac{n-t}{2}}\right\rbrace$, with $t\geq 0 $. Theorem \ref{teoimp2} implies that any connected component of $G$ that is not an isolated vertex is a complete bipartite graph. Since the spectrum of the complete bipartite graph $K_{p,q}$ is $\left\lbrace\left[\sqrt{pq}\right]^{1}, [0]^{p+q-2},\left[-\sqrt{pq}\right]^{1} \right\rbrace$, it follows that for any two connected components $K_{a,b}$ and $K_{c,d}$ of $G$ we have $ab=cd= \lambda^2$.
The case where $\lambda = 1$ follows from the fact that $K_{1, 1}$ is the complete graph $K_2$, which concludes the proof of $(i)$. 

Now suppose that $G$ has exactly two distinct nonzero absolute eigenvalues. Then \mbox{$G \in \mathcal{G}_n$} if and only if its spectrum is of the form $\left\lbrace\left[\lambda\right]^1,\left[\mu\right]^{n-k-t-1},\left[0\right]^{t},\left[-\mu\right]^{k}\right\rbrace$, where $ t \geq 0$, $k \geq 2$ and  $\lambda > \mu >0$. Note that the index $\lambda$ is a simple eigenvalue, otherwise $G \in \mathcal{G}_n$ would have only one nonzero absolute eigenvalue. Hence  only one connected component $G_1$ of $G$ contains $\lambda$ as an eigenvalue. It is clear that $G \in \mathcal{G}_n$ if and only if $G_1 \in \mathcal{G}_r$,  for some $r < n$,  $G_1$ also contains $-\mu$ as an eigenvalue, and all other connected components of $G$ are either isolated vertices or they have exactly two distinct nonzero eigenvalues: %with the same absolute value
$\mu$ and $-\mu$.  Thus, by Theorems \ref{teoimp3} and \ref{teoimp4}, 
the spectrum of $G_1$ is integral and consequently the spectrum of $G$ is integral. Besides, Theorem \ref{teoimp2} implies that each connected component of $G$ different from $G_1$ that is not an isolated vertex is a complete bipartite graph $K_{p,q}$ such that $pq = \mu^{2}$. In particular,  $\mu =1$ if and only if each connected component $K_{p, q}$ is the complete graph $K_{2}$. Also note  that in this case  $G_1$ is the complete graph $K_r$, otherwise it would have diameter greater than 1 and its smallest eigenvalue would be at most $-\sqrt{2}$, a contradiction.\QEDA 

\end{proof}

It follows from Proposition \ref{disc} that all disconnected graphs in $\mathcal{G}_n$ that do not have a connected component $K_{p, q}$ such that $pq$ is not a perfect square are integral. It also follows that to completely characterize a disconnected graph $G$ in $\mathcal{G}_n$ it is enough to characterize the connected component $G_1$ that contains the index of $G$. 

\section{Concluding remarks}
\label{open}

In this work we investigated Problems \ref{prob} and \ref{prob2} proposed by Nikiforov \cite{Niki2}. We \mbox{considered}  the families $\mathcal{G}_n$ and $\mathcal{H}_n$ of nonempty graphs that satisfy the properties required in Problems \ref{prob} and \ref{prob2}, respectively.

We gave a constructive characterization of all connected regular graphs in $\mathcal{G}_n$ with three distinct eigenvalues, and with four distinct eigenvalues where the index is not the only simple. We also presented several infinite families of connected graphs in  $\mathcal{G}_n$ with four distinct eigenvalues where the index is the only simple eigenvalue. 

In the case of connected irregular graphs in $\mathcal{G}_n$, we gave a constructive characterization when they have three distinct eigenvalues, one equals $0$, and when they have three nonzero distinct eigenvalues, one equals $2$.  All irregular graphs in $\mathcal{G}_n$ with three distinct nonzero eigenvalues, which are precisely the graphs in $\mathcal{H}_n$, were characterized as integral multiplicative graphs. Although this characterization is not constructive, we presented two infinite families of these graphs which include those with $2$ as an eigenvalue. Using a product construction with graphs of these families we generated two new infinite families of connected irregular graphs in $\mathcal{G}_n$ with four distinct eigenvalues.

%%%%%%%%%%%%%%%%%%%%%%%%%%%%%%%%%%%%%%%%%%%%%%%%%%%%%%%%%%%%%%%%%%%%%

\section*{Acknowledgments} The authors would like to thank Vladimir Nikiforov for his suggestions on a preliminary version of this manuscript. This work is part of the doctoral studies of Nelcy Arévalo, who thanks CAPES for their support. 

\bibliographystyle{abbrv}
\bibliography{sample}

\end{document}